\documentclass{article}
\usepackage[margin=1in]{geometry}
\usepackage{amsmath,amssymb,mathtools}
\usepackage{amsthm,mathrsfs}
\usepackage{graphicx}

\usepackage{booktabs}
\usepackage{multirow}
\usepackage{colortbl}
\usepackage{diagbox}
\usepackage{float}

\usepackage{makecell}
\usepackage{subcaption}
\usepackage{arydshln}
\usepackage{xargs}
\usepackage{float}
\usepackage{cases}
\usepackage{caption}
\usepackage{authblk}
\usepackage[hidelinks]{hyperref}
\newcommand{\va}{\boldsymbol{a}}

\newcommand{\ba}{\mathbf{a}}
\newcommand{\bb}{\mathbf{b}}

\newcommand{\bff}{\mathbf{f}}
\newcommand{\bg}{\mathbf{g}}

\newcommand{\br}{\mathbf{r}}

\newcommand{\bw}{\mathbf{w}}
\newcommand{\bx}{\mathbf{x}}
\newcommand{\by}{\mathbf{y}}

\newcommand{\bA}{\mathbf{A}}

\newcommand{\bF}{\mathbf{F}}

\newcommand{\bW}{\mathbf{W}}

\newcommand{\cB}{\mathcal{B}}

\newcommand{\cL}{\mathcal{L}}

\newcommand{\valpha}{{\boldsymbol{\alpha}}}
\newcommand{\vphi}{\boldsymbol{\phi}}
\newcommand{\vpsi}{\boldsymbol{\psi}}

\newcommand{\R}{\mathbb R}
\newcommand{\ds}{\displaystyle}

\definecolor{blue}{rgb}{0.0,0.0,1.0}
\definecolor{red}{rgb}{1.0,0.0,0.0}
\definecolor{purple}{rgb}{0.75, 0.0, 1.0}

\title{Generalized Transferable Neural Networks for Steady-State Partial
Differential Equations}
\date{}
\author[1]{Tao Cheng}
\author[2]{Lili Ju \thanks{\href{ju@math.sc.edu}{Corresponding author: ju@math.sc.edu}}}
\author[3]{Zhonghua Qiao}
\author[1]{Xiaoping Zhang}

\affil[1]{\small School of Mathematics and Statistics, Wuhan University, Wuhan, Hubei, 430072, China}
\affil[2]{\small Department of Mathematics, University of South Carolina, Columbia, SC 29208, USA}
\affil[3]{\small Department of Applied Mathematics, The Hong Kong Polytechnic University, Hung Hom, Kowloon, Hong Kong}

\newcommand\keywords[1]{\textbf{Keywords}: #1}

\newtheorem{lemma}{Lemma}
\newtheorem{theorem}{Theorem}
\newtheorem{remark}{Remark}

\begin{document}

\maketitle

\begin{abstract}
Deep learning has emerged as a compelling framework for scientific and engineering computing, motivating growing interest in neural network-based solvers for partial differential equations (PDEs). Within this landscape, network architectures with deterministic feature construction have become an appealing approach, offering both high accuracy and computational efficiency in practice. Among them, the transferable neural network (TransNet) is a special class of shallow neural networks (i.e., single-hidden-layer architectures), whose hidden-layer parameters are predetermined according to the principle of uniformly distributed partition hyperplanes. Although TransNet has demonstrated strong performance in solving PDEs with relatively smooth solutions, its accuracy and stability may deteriorate in the presence of highly oscillatory solution structures, where activation saturation and system conditioning issues become limiting factors. In this paper, we propose a generalized transferable neural network (GTransNet) for solving steady-state PDEs, which augments the original TransNet design with additional hidden layers while preserving its interpretable feature-generation mechanism. In particular, the first hidden layer of GTransNet retains TransNet's parameter sampling strategy but incorporates an additional symmetry constraint on the neuron biases, while the subsequent hidden layers omit bias terms and employ a variance-controlled sampling strategy for selecting neuron weights. This architecture effectively mitigates saturation effects and enhances adaptability to local solution variations, as supported by theoretical analysis and quantitative studies of neuron behaviors. A broad suite of numerical experiments and comparisons in two and three dimensions demonstrates that the proposed GTransNet significantly and consistently outperforms TransNet, particularly for PDEs with high-frequency and multiscale solutions. The proposed framework therefore advances the development of neural network methods by providing a powerful and efficient computational tool for steady-state PDE problems.
\end{abstract}

\keywords{Transferable neural network, steady-state PDEs,  highly oscillatory solutions, multiscale features, uniform neuron distribution, variance-control strategy}

\section{Introduction}
Deep learning has achieved groundbreaking progress in many fields, including image recognition \cite{ResNet}, time series analysis \cite{wang2025timemixer}, computer vision \cite{Huang2017}, and generative modeling \cite{diffusionmodels}. Its success has also increasingly extended into scientific and engineering computing, fostering interdisciplinary collaboration among mathematicians, computer scientists, and engineers. A central focus of this effort lies in leveraging both data and physical knowledge to numerically solve partial differential equations (PDEs). Neural network–based PDE solvers, including deep Ritz method \cite{deepRitz}, deep Nitsche method \cite{MR4247207}, deep Galerkin method \cite{DGM}, physics-informed neural networks (PINNs) \cite{PINNs}, weak adversarial networks (WAN) \cite{WAN}, penalty-free neural networks (PFNN) \cite{SHENG2021110085}, and neural operators (FNO \cite{Li2020FourierNO}, DeepONet \cite{DeepONet}), have emerged as powerful computational frameworks. Among them, PINNs stand out by embedding governing equations and boundary conditions as physical constraints during training, offering advantages such as mesh-free implementation, flexible boundary handling, and a unified treatment of forward and inverse problems \cite{MR4229292,MR4083367,MR3995303,  MR4513793,  MR4337814, MR4166066}.

Compared with classical numerical schemes such as finite element method (FEM), finite volume method (FVM), finite difference method (FDM), and spectral method (SM), neural network–based approaches avoid mesh generation and are naturally applicable to high-dimensional problems \cite{WAN}. However, most deep learning-based solvers rely on stochastic optimization, and the nonconvexity of the objective or loss functions often leads to high computational cost and slow convergence \cite{Markidis2021TheOA}. While stochastic gradient descent (SGD) and its variants have gradually improved in efficiency, achieving highly accurate approximate solutions still remains very challenging in general.
Moreover, neural networks exhibit an intrinsic spectral bias (or F-principle) \cite{Rahaman2018OnTS, FPrinciple}, tending to learn low-frequency components first and making high-frequency features difficult to capture. To address this issue, several methods have been proposed to enhance high-frequency solution learning, such as the multiscale deep neural network (MscaleDNN) \cite{MscaleDNN2020}, which introduces multiple scaling factors in the frequency domain, and the feature-enhanced physics-informed radial basis neural networks (FE-PIRBN) \cite{ZHANG2025113798}, which combines PIRBN with hash encoding for high-frequency electromagnetic scattering. Further extensions and applications are presented in \cite{Xian2024,XiAn2023, Yong2024, MR4636189}.

Shallow neural networks, usually referring to fully connected (FC) networks with a single hidden layer, have recently regained attention due to their efficiency and accuracy. A representative class of such methods includes Extreme Learning Machine (ELM) \cite{ELM}, Random Feature Method (RFM) \cite{RFM}, and Transferable Neural Network (TransNet) \cite{TransNet}. Unlike conventional deep neural network models, these methods first predetermine the weights and biases of the hidden-layer neurons without training, while the weights of the output layer are computed via least-squares solvers, thereby achieving high computational efficiency.

Specifically, ELM typically samples neuron weights and biases from uniform or Gaussian distributions, and RFM further enhances expressive capacity by additionally incorporating domain decomposition and the partition-of-unity approach. These methods have been successfully applied to various PDE problems and low-dimensional applications, as documented in \cite{CHENJinrun25,Dong2021,DONG2021110585,ELIADEFALCO2026114553,Wangfei2024, ChenJingrun2025, Dong2024, Jingrun2025, weakTransNet}. TransNet employs a geometrically interpretable technique with location and shape parameters to predetermine neuron weights and biases, producing a uniform distribution of partition hyperplanes in the hidden layer and enabling more effective and robust feature extraction than ELM and RFM in practice \cite{TransNet}.
Several variants of TransNet have also been developed, including the Multi-TransNet method for elliptic interface problems \cite{Lu25}, which outperforms RFM \cite{Chi24} and ELM \cite{Liang25} and traditional finite element methods, and the MAE-TransNet method for singularly perturbed problems \cite{Shen2025}, which effectively overcomes boundary layer difficulties.

Although TransNet, through its adaptive selection of shape parameters, has the potential to handle problems with oscillatory solutions, it suffers from saturation effects that require a large number of neurons to maintain accuracy, thereby severely increasing the condition number of the resulting least-squares systems and reducing computational efficiency. To overcome these limitations, we propose in this paper a generalized transferable neural network (GTransNet) with multiple hidden layers for solving steady-state PDEs. The proposed method retains the structural design of TransNet in the first hidden layer and adopts no bias terms together with a variance-controlled weight sampling strategy in the subsequent hidden layers. The GTransNet framework significantly improves the network’s expressive capacity and adaptability, effectively mitigating TransNet’s performance degradation in high-frequency solution regimes. Both theoretical analysis and numerical experiments confirm that GTransNet achieves superior accuracy and stability across a wide range of challenging scenarios in science and engineering applications.


The remainder of this paper is organized as follows. Section \ref{sec:2} provides an overview of shallow neural network–based PDE solvers and highlights key properties of TransNet, including a qualitative analysis of its neuron behavior. Section \ref{sec:3} introduces the proposed GTransNet architecture and presents the associated theoretical results. Section \ref{sec:4} reports a series of numerical experiments in two and three dimensions, including Poisson equation, 
Navier-Stokes equations, Helmholtz equation, multiscale elliptic equation and Allen-Chan equation,  to illustrate the accuracy, efficiency, and robustness of GTransNet. Finally, Section \ref{sec:5} offers concluding remarks and discusses potential directions for future research.

\section{Transferable Neural Networks for PDEs}\label{sec:2}

Let us consider the following general form of a steady-state PDE problem:
\begin{equation}\label{modelp}
\begin{cases}
\mathcal{L}(u)(\bx) = f(\bx), & \bx \in \Omega, \\
\mathcal{B}(u)(\bx) = g(\bx), & \bx \in \partial\Omega,
\end{cases}
\end{equation}
where $\Omega \subset \mathbb{R}^d$ is an open bounded spatial domain with a Lipschitz boundary \(\partial \Omega\),
$u(\bx):\Omega\rightarrow\mathbb{R}$ denotes the unknown function,
\(\mathcal{L}(\cdot)\) defines a linear or nonlinear differential operator, and \(\mathcal{B}(\cdot)\) specifies 
certain boundary conditions. The functions  \(f(\bx)\) and \(g(\bx)\) represent the source term and the prescribed boundary data, respectively. We also note that when the target PDE problem is time-dependent, the temporal variable $t$ often can be combined with the spatial variable $\bx$ for an unified processing in neural networks.

\subsection{Shallow neural network-based solvers for PDEs}
A shallow neural network $u_{\mathrm{SNN}}: \Omega \rightarrow \mathbb{R}$ with a single hidden layer can be represented as
\begin{equation}\label{snn}
u_{\mathrm{SNN}}(\bx) = \valpha^\top \vpsi\quad\mbox{with}\quad  \vpsi = \sigma(\bW \bx + \bb), 
\end{equation}
where $\bx\in \Omega$ is the network input, $\bW = [\bw_1\, \bw_2\, \ldots\, \bw_M]^\top \in \R^{M\times d}$ and $\bb = (b_1, b_2, \ldots, b_M) \in \R^{M}$ are respectively referred to as the neuron weights and biases of the hidden layer, with $M$ denoting the number of neurons, $\sigma$ is an activation function, and $\valpha = (\alpha_1, \ldots, \alpha_M) \in \R^{M}$ represents the weights of the output layer. Thus, the hidden-layer neurons of the shallow neural network \eqref{snn} are given by $\vpsi(\bx) = (\psi_1(\bx), \psi_2(\bx), \ldots, \psi_M(\bx))$, with
\begin{equation}\label{eq:psi}
\psi_m(\bx) = \sigma(\mathbf{w}_m^\top \bx + b_m), \quad m = 1,\cdots, M.
\end{equation}
Note that the bias is omitted in the output layer in this case, since it is usually not very useful in practice.

Given a set of collocation points $\{\bx_{i}\}_{i=1}^{K_{\text{int}}}$ sampled inside the domain $\Omega$ and $\{\mathbf{y}_{j}\}_{j=1}^{K_{\text{bdy}}}$ on the  boundary $\partial \Omega$, a shallow neural network-based solver for the PDE problem \eqref{modelp} can then be formulated as a residual-type loss minimization problem as follows:  find $u_{\mathrm{SNN}}$ defined by \eqref{snn} such that
\begin{equation}\label{eq:optim_SN-o}
\begin{aligned}
&\min_{\bW,\bb,\valpha} \;{\cal F}_{Loss}(u_{\mathrm{SNN}};\,\{\bW,\bb,\valpha\}) \\
&\qquad := \textstyle \;\lambda_1\sum\limits_{i=1}^{K_{\text{int}}} \|\cL(u_{\mathrm{SNN}})(\bx_i)-f(\bx_i)\|^2 
+ \lambda_2 \sum\limits_{j=1}^{K_{\text{bdy}}} \|\cB(u_{\mathrm{SNN}})(\mathbf{y}_j)-g(\mathbf{y}_j)\|^2,
\end{aligned}
\end{equation}
where $\lambda_1,\lambda_2>0$ are some user-defined penalty parameters used  to balance the interior and boundary residuals, and $\cL(\vpsi)$ and $\cB(\vpsi)$ denote the elementwise application of the operators $\cL$ and $\cB$ to the hidden-layer vector $\vpsi$. 

Among existing shallow neural network–based PDE solvers, a popular approach is to randomly predetermine the weights $\bW$ and the biases $\bb$ of the hidden-layer neurons and to learn only the weights $\valpha$ of the output layer. That is, the original minimization problem \eqref{eq:optim_SN-o} becomes a smaller-scale one: given ${\bW,\bb}$, find $u_{\mathrm{SNN}}$ such that
\begin{equation}\label{eq:optim_SN}
\min_{\valpha}\; {\cal F}_{Loss}(u_{\mathrm{SNN}};\,\{\valpha\}).
\end{equation}
Such strategy has been widely adopted in models such as RFM \cite{RFM}, ELM \cite{ELM} and TransNet \cite{TransNet}. Denote $K := K_{\mathrm{int}} + K_{\mathrm{bdy}}$ as the total number of training points. 
For linear PDE problems, the solution of the optimization problem \eqref{eq:optim_SN} is then equivalent to solving the following linear system of size $K\times M$ in the least squares sense:
\begin{equation}\label{eq:ls}
    \begin{bmatrix}
        \lambda_1\bF_{\mathrm{pde}}   \\
        \lambda_2\bF_{\mathrm{bdy}}
    \end{bmatrix}
    \valpha = \begin{bmatrix}
        \lambda_1 \bff \\ 
        \lambda_2 \bg
    \end{bmatrix},
\end{equation}
where 
\begin{equation}\label{eq:FFfg}
\begin{array}{c}
\bF_{\mathrm{pde}} = \begin{bmatrix}
\cL(\vpsi^\top)(\bx_1) \\
\vdots \\
\cL(\vpsi^\top)(\bx_{K_{\text{int}}}) 
\end{bmatrix}, ~
\bF_{\mathrm{bdy}} = \begin{bmatrix}
\cB(\vpsi^\top)(\mathbf{y}_1) \\
\vdots \\
\cB(\vpsi^\top)(\mathbf{y}_{K_{\text{bdy}}}) 
\end{bmatrix},  \\ \\
\bff = \begin{bmatrix}
f(\bx_1) \\
\vdots \\
f(\bx_{K_{\text{int}}}) 
\end{bmatrix}, ~
\bg = \begin{bmatrix}
g(\mathbf{y}_1) \\
\vdots \\
g(\mathbf{y}_{K_{\text{bdy}}}) 
\end{bmatrix}.
\end{array}
\end{equation}
The penalty parameters can be selected as follows: 
\begin{equation}\label{eq:w_p}
\begin{aligned}
&\ds \lambda_1 =\frac{1}{\max_{1\le i \le K_{\mathrm{int}}}\bigl(\max_{1\le j \le M}|\cL(\psi_j)(\bx_i)|,\ |f(\bx_i)|\bigr)},\\
&\ds \lambda_2 = \frac{1}{\max_{1\le i \le K_{\mathrm{bdy}}}\bigl(\max_{1\le j \le M}|\cB(\psi_j)(\mathbf{y}_i)|,\ |g(\mathbf{y}_i)|\bigr)},
\end{aligned}
\end{equation}
which have been discussed in \cite{Chi24,Lu25}. For nonlinear PDE problems, one can linearize the optimization problem \eqref{eq:optim_SN} and then effectively solve it using the Picard iteration approach; see \cite{TransNet} for details.

\subsection{Partition hyperplanes and sampling strategy in TransNet}

The key difference between ELM, RFM, and TransNet lies in the predetermination strategy of the single hidden-layer parameters. While ELM and RFM typically sample the weights from classical uniform or Gaussian distributions, TransNet employs a more meaningful sampling scheme designed to achieve better numerical approximation ability \cite{TransNet}.

Inspired by the activation patterns of ReLU networks, TransNet reparameterizes each neuron of the hidden layer defined in~\eqref{eq:psi} as
\begin{equation}\label{eq:TransNet_Init}
\psi_m(\bx) = \sigma\big(\gamma_m (\mathbf{a}_m^\top \bx + r_m)\big),\quad m=1,\ldots,M,
\end{equation}
where the unit vector $\mathbf{a}_m$ (i.e., $|\ba_m|=1$) and the scalar $r_m$ serve as location parameters that define
the direction and offset of the $m$-th partitioning hyperplane, respectively, and
the parameter $\gamma_m>0$ is a predefined shape factor controlling the steepness of the activation function.
This implies $\ba_m=\bw_m/\gamma_m$ and $r_m = b_m/\gamma_m$ in~\eqref{eq:psi}.

Each hidden-layer neuron can be viewed as a hyperplane $\mathbf{a}_m^\top \bx + r_m = 0$ dividing the input space into two regions. In ReLU neural networks, the intersections of these hyperplanes form piecewise linear regions, within which the network provides a linear approximation of the target function. When solving PDEs with smooth solutions, simply increasing the number of linear regions does not guarantee overall higher accuracy; their spatial uniformity also plays a crucial role. This insight motivated the concept of ``uniform neuron distribution'' in \cite{TransNet}, which enhances the network’s global approximation capability. For completeness, we next introduce the concept and construction of uniform neuron distribution as presented in \cite{TransNet}.

Taking the unit ball $B_1(\mathbf{0}) \subset \mathbb{R}^d$ as an example, to achieve uniform expressivity across this domain, the TransNet method \cite{TransNet} predetermines and distributes the hyperplanes $\mathbf{a}_m^\top \bx + r_m = 0$ in the following way. Each neuron corresponds to a partition hyperplane, whose direction is randomly sampled on the unit sphere $S^{d-1}$:
\begin{equation}\label{eq:sample_a}
\mathbf{a}_m = \frac{\mathbf{z}}{|\mathbf{z}|}, \quad \mathbf{z} \sim \mathcal{N}(\mathbf{0}, {\bf I}_d),
\end{equation}
that is, a vector is drawn from a standard Gaussian distribution and then normalized to unit length. The absolute offset $|r_m|$ determines the distance of the hyperplane from the origin and is sampled according to
\begin{equation}\label{unif1}
r_m \sim \mathcal{U}[0, 1]
\end{equation}
as suggested in \cite{TransNet}.

\begin{remark}
Due to the symmetry of $|r_m|$,
we also note that an alternative way to sample $r_m$ is to use the uniform distribution over $[-1,1]$, i.e.,
\begin{equation}\label{unif2}
r_m \sim \mathcal{U}[-1, 1],
\end{equation}
which additionally maintains the central symmetry of the neuron bias distribution about zero.
\end{remark}

For any point $\by \in B_1(\mathbf{0})$, the distance to the hyperplane corresponding to the $m$-th neuron is clearly given as
$\text{dist}(\by, m) = \big|\ba_m^\top (\by - r_m \ba_m)\big|$.
Based on this definition, a density function is introduced in \cite{TransNet} to quantify the spatial distribution of partition hyperplanes, or equivalently, hidden-layer neurons:
\begin{equation}
D_M(\by) = \frac{1}{M} \textstyle\sum\limits_{m=1}^{M} \mathbf{1}_{\{\text{dist}(\by, m) < \tau\}}(\by),
\end{equation}
where $\mathbf{1}_{\{\cdot\}}$ denotes the indicator function. The following theoretical result characterizes the distribution of hidden-layer neurons in TransNet.

\begin{theorem}[Uniform Neuron Distribution in $B_1(\mathbf{0)}$ \cite{TransNet}]\label{thm:uniform_neuron}
If the direction vectors $\{\ba_m\}_{m=1}^M$ are uniformly distributed on the unit sphere $S^{d-1}$ and the offset parameters $\{r_m\}_{m=1}^M$ are independent and uniformly distributed over the unit interval $[0,1]$, then for any point $\by\in B_1(\mathbf{0)}$ satisfying $\|\by\|_2 \leq 1 - \tau$, it holds
\begin{equation}
\mathrm{E}[D_M(\by)] = \tau.
\end{equation}
\end{theorem}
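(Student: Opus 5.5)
By linearity of expectation and because the pairs $(\ba_m,r_m)$ are identically distributed,
\[
\mathrm{E}[D_M(\by)] = \frac{1}{M}\sum_{m=1}^M \Pr\bigl(\mathrm{dist}(\by,m)<\tau\bigr) = \Pr\bigl(|\ba^\top\by - r|<\tau\bigr),
\]
where $\ba$ is uniform on $S^{d-1}$ and $r\sim\mathcal U[0,1]$ is independent of $\ba$. This uses $\mathrm{dist}(\by,m)=|\ba_m^\top\by - r_m|$, which follows from $|\ba_m|=1$. The whole claim therefore reduces to showing that this single probability equals $\tau$.

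I would condition on $\ba$. Set $s:=\ba^\top\by$. By Cauchy--Schwarz and the hypothesis $\|\by\|_2\le 1-\tau$, we have $|s|\le 1-\tau$ for every $\ba$. Given $\ba$, the event is $r\in(s-\tau,s+\tau)$, whose probability under $\mathcal U[0,1]$ is $\bigl|(s-\tau,s+\tau)\cap[0,1]\bigr|$.

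The main obstacle is that when $s\ge 0$ the interval $(s-\tau,s+\tau)$ lies in $[-1,1]$, but it may stick out below $0$. So with $r\sim\mathcal U[0,1]$ the conditional probability is not simply $2\tau$. Here I would use the symmetry of the direction distribution. The map $\ba\mapsto-\ba$ preserves the uniform measure on $S^{d-1}$ and sends $s\mapsto -s$. Averaging the conditional probabilities for $\ba$ and $-\ba$ gives
\[
\tfrac12\Bigl(\bigl|(s-\tau,s+\tau)\cap[0,1]\bigr| + \bigl|(-s-\tau,-s+\tau)\cap[0,1]\bigr|\Bigr).
\]
Reflecting the second interval, this equals
\[
\tfrac12\bigl|(s-\tau,s+\tau)\cap[-1,1]\bigr| = \tfrac12\cdot 2\tau = \tau,
\]
because $|s|\le1-\tau$ places the whole interval inside $[-1,1]$. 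This is the same as saying that the symmetrized offset $\pm r$ is $\mathcal U[-1,1]$, which is exactly the observation behind the Remark preceding the theorem. Alternatively, I could replace $r$ from the start by $\epsilon r$ with an independent sign $\epsilon$, and absorb $\epsilon$ into $\ba$.

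Integrating the symmetrized conditional probability over $\ba$ then yields $\Pr(|\ba^\top\by-r|<\tau)=\tau$, and hence $\mathrm{E}[D_M(\by)]=\tau$.

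A subtle point to flag is that this symmetrization step is essential. With $r\in[0,1]$ alone and a fixed $\ba$, the conditional probability is not $2\tau$, and the identity holds only after averaging over the sign of $\ba$. The only properties of the direction distribution used are its invariance under $\ba\mapsto-\ba$ and $|\ba|=1$; the full uniformity on $S^{d-1}$ is not needed.
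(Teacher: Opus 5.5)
Your proof is correct and complete. The paper does not prove this theorem itself; it quotes the result from the TransNet reference, so there is no in-paper argument to compare yours with.

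Each step holds up:
\begin{itemize}
\item The reduction to $\Pr(|\ba^\top\by-r|<\tau)$ uses only that the pairs $(\ba_m,r_m)$ are identically distributed. Independence across $m$ is never needed.
\item Conditioning on $\ba$ is legitimate because $\ba$ and $r$ are independent.
\item Set $h(s)=|(s-\tau,s+\tau)\cap[0,1]|$ and $s=\ba^\top\by$, so that $s\stackrel{d}{=}-s$. Then $\mathrm{E}[h(s)]=\tfrac12\mathrm{E}[h(s)+h(-s)]=\tfrac12|(s-\tau,s+\tau)\cap[-1,1]|=\tau$. The shared endpoint $0$ is a null set, and $|s|\le 1-\tau$ keeps the interval inside $[-1,1]$.
\end{itemize}

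Two things your route buys are worth stating explicitly.

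First, the paper describes the $m$-th hyperplane as $\ba_m^\top\bx+r_m=0$ but defines $\mathrm{dist}(\by,m)=|\ba_m^\top\by-r_m|$. These conventions differ by the sign of $\ba_m$. Your argument rests only on the invariance $\ba\mapsto-\ba$, so it proves the theorem under either reading.

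Second, as you note, full uniformity on $S^{d-1}$ is unnecessary. GTransNet actually uses the centered offsets $r_m\sim\mathcal U[-1,1]$ of \eqref{unif2}. Under that sampling the conditional probability given the direction is already $\tau$ for every $\ba$, so no symmetrization is needed. The conclusion then holds for any law of unit directions that is independent of the offsets.
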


According to Theorem \ref{thm:uniform_neuron}, the hidden-layer neurons in TransNet are expected to be uniformly distributed within the unit ball $B_1(\mathbf{0})$ (slightly away from $\partial B_1(\mathbf{0})$).
Through an affine transformation (including scaling and translation), this distribution can be extended to a ball $B_R(\mathbf{x_c})$ centered at $\mathbf{x_c}$ with radius $R$.
In this case, the corresponding partition hyperplane is given by
$\mathbf{a}_m^\top (\mathbf{x - x_c}) + R r_m = 0.
$
For a general problem domain $\Omega$, the center $\mathbf{x_c}$ of the ball $B_R(\mathbf{x_c})$ should be chosen as approximately the geometric center of $\Omega$ and the radius $R$ should be chosen to approximate the geometric center of $\Omega$, and the radius $R$ should be selected such that $B_R(\mathbf{x_c})$ is slightly larger than the domain $\Omega$
(i.e., $\Omega \subset B_R(\mathbf{x_c})$), in order to better enforce the boundary conditions and enhance the effectiveness of the hidden-layer neurons.
For further details, see \cite{Lu25}.

\begin{remark}
It is worth noting that while ELM and RFM can be straightforwardly extended to the setting of multi-hidden-layer neural networks, this extension remains challenging for TransNet due to the involvement of domain geometry information and the introduction of uniform neuron distribution.
\end{remark}

\subsection{Quantitative study of neuron behaviors in TransNet}

For simplicity, it is common in practice to take the same shape parameter for all hidden-layer neurons, i.e., $\gamma_m = \gamma$ for $m = 1,\ldots,M$. In addition, using ReLU as the activation function in TransNet often yields low accuracy, likely due to its weak nonlinearity; instead, the activation function $\sigma = \tanh$ is more commonly used for PDE-related problems. We follow these practices for analysis in this work, i.e.,
\begin{equation}\label{ssp}
\psi_m(\bx) = \tanh\big(\gamma (\mathbf{a}_m^\top (\bx-\bx_c) + Rr_m)\big),\quad m=1,2,\ldots,M.
\end{equation}

The shape parameter $\gamma$ plays a crucial role in the performance of TransNet for solving PDEs. When the PDE solution is relatively smooth, the model is less sensitive to $\gamma$, and a small value of $\gamma$ can readily yield satisfactory results. However, when the solution contains high-frequency components or steep gradients, the model’s performance becomes highly sensitive to variations in $\gamma$. Notably, experimental results indicate that an excessively large $\gamma$ does not necessarily improve accuracy; instead, it may even degrade performance or lead to solver failure. To investigate this phenomenon, we conduct some qualitative analysis of the behavior of hidden-layer neurons.

Figure \ref{fig:transnet_basis} illustrates visualizations over the square $(-1,1)^2$ of a sample hidden-layer neuron $\psi_m$ in TransNet associated with $B_{1.5}(\mathbf{0})$ (i.e., $\bx_c=\mathbf{0}$ and $R=1.5)$ in two dimensions under different shape parameters. Given sampled parameters $\va_m$ and $r_m$ through \eqref{eq:sample_a} and \eqref{unif1}, they together define a hyperplane, where $\va_m^\top \bx + R r_m$ represents the signed distance from a point $\bx$ to that hyperplane. This distance is then scaled by the shape parameter $\gamma$ and passed through the activation function $\tanh$ to produce the neuron response. It is easy to see that, as $\gamma$ increases, the activations enter the saturation region (where the output of the $\tanh$ function approaches its limits (near $\pm 1$) and becomes nearly insensitive to input changes) more rapidly. Although a larger $\gamma$ can yield steeper gradients within the non-saturated region and enhance sensitivity to local features, the shrinking of this region weakens the model’s overall expressive capacity, thereby reducing its global approximation ability. Figure \ref{fig:density_transnet} presents the corresponding distribution histograms of the hidden-layer neuron activation values under different values of $\gamma$. As $\gamma$ increases, most activation values cluster in the saturation region. Specifically, since TransNet uses uniform sampling for the offset parameter $r_m$ of the hidden-layer neurons over the range $[0,1]$, more neurons tend to accumulate near the value $1$, which further reflects the model’s limited expressive capability.

\begin{figure}[ht!]
    \centering
    \includegraphics[width=0.95\linewidth]{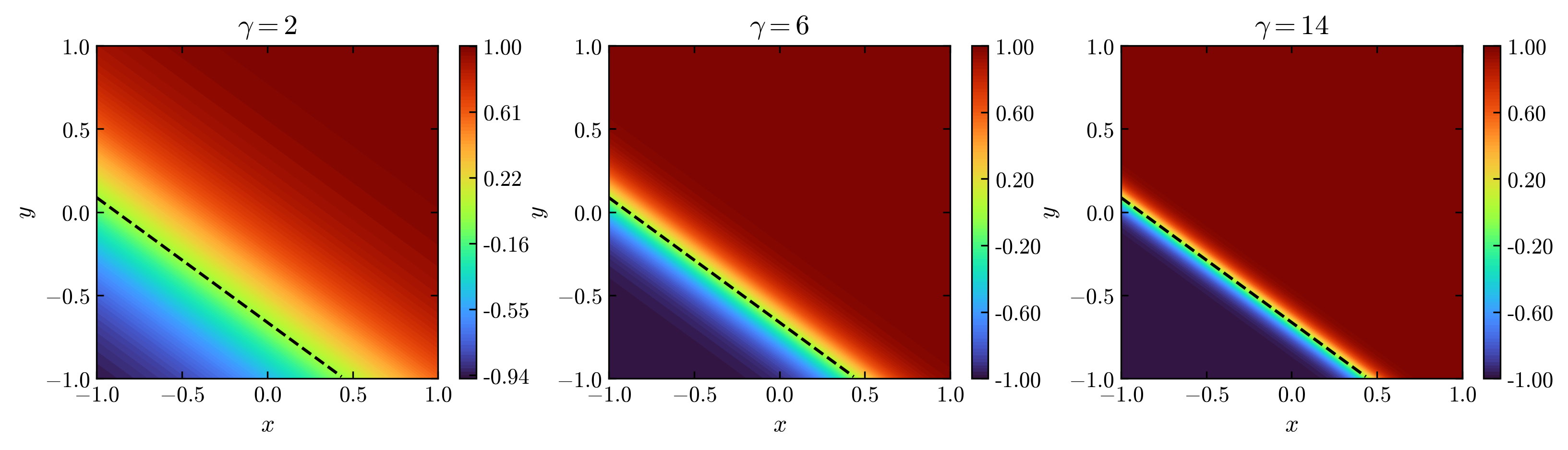}\vspace{-0.2cm}
    \caption{Visualization over the square $(-1,1)^2$  of a sample hidden-layer neuron $\psi_m(\bx)$ in TransNet associated with $B_{1.5}(\mathbf{0})$ in two dimensions, where the dashed line indicates the corresponding partition hyperplane. From left to right:  $\gamma$ =  2, 6, 14.}
    \label{fig:transnet_basis}\vspace{-0.3cm}
\end{figure}

\begin{figure}[ht!]
    \centering
    \includegraphics[width=0.9\linewidth]{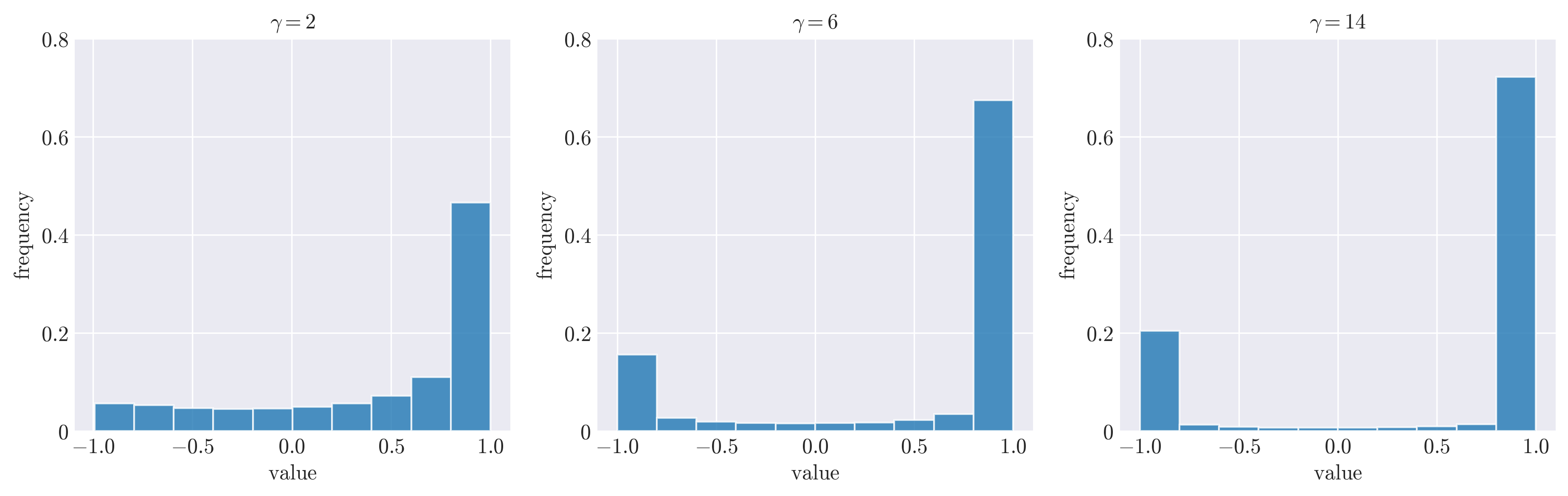}\vspace{-0.2cm}
    \caption{Distribution histograms of the hidden-layer neuron activation values in TransNet associated with $B_{1.5}(\mathbf{0})$ in two dimensions.  Activation values are collected from 500 input points over the unit square $(-1,1)^2$, with each point corresponding to 1000 different hidden-layer neurons, illustrating how the response pattern and concentration vary along with the shape parameter $\gamma$.  From left to right: $\gamma$ =  2, 6, 14.}
    \label{fig:density_transnet}\vspace{-0.3cm}
\end{figure}

\begin{figure}[ht!]
    \centering
    \includegraphics[width=0.65\linewidth]{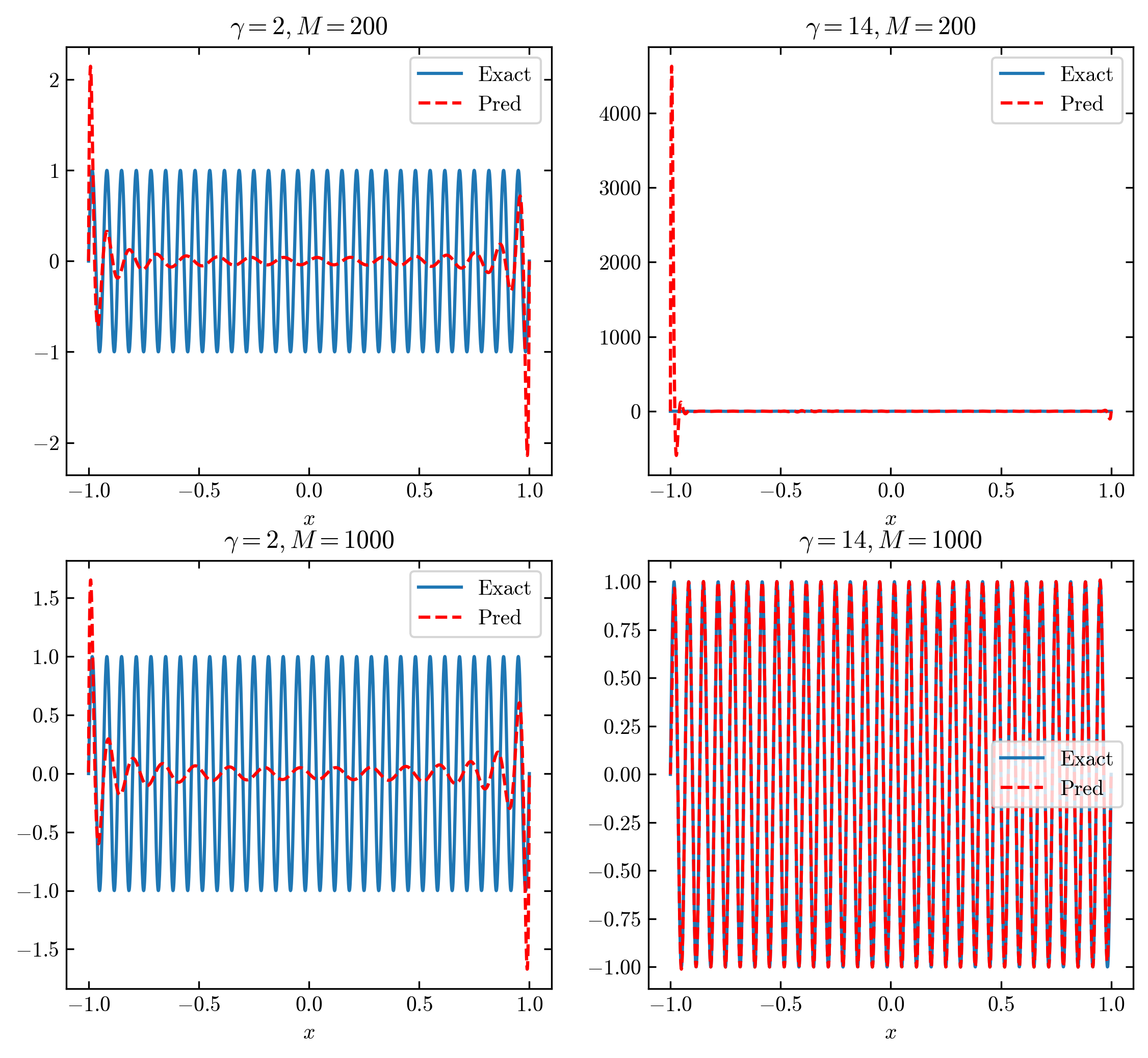}
    \vspace{-0.2cm}
    \caption{Numerical results of fitting the high-frequency function $f(x) = \sin(30\pi x)$ in the interval $(-1,1)$ by using a
    TransNet associated with $B_{1.1}(\mathbf{0})$ in one dimension. 1000 uniformly distributed collocation points are used. From Left to right: $\gamma=2, 14$; From top to bottom:  $M= 200, 1000$.}
    \label{fig:1Dtransnet}\vspace{-0.3cm}
\end{figure}

Based on these findings and analyses, we can understand why TransNet performs well for problems with smooth solutions, but its performance deteriorates when dealing with problems with highly oscillatory or large-gradient solutions. To overcome this shortcoming, a natural approach is to use a large number of hidden-layer neurons (i.e., large values of $M$) together with a large value of the shape parameter $\gamma$. This approach is feasible to some extent, as demonstrated by the following example of fitting a one-dimensional function. We use a TransNet to fit the high-frequency function $y = \sin(30\pi x)$ on the interval $(-1,1)$, and we choose the ball $B_{1.1}(\mathbf{0})$ (i.e., $\bx_c = 0$ and $R = 1.1$) to generate the hidden-layer neurons. Figure \ref{fig:1Dtransnet} shows the corresponding numerical results with different numbers of hidden-layer neurons and different values of the shape parameter. It is observed that when $\gamma$ or $M$ is small (i.e., $\gamma = 2$ or $M = 200$), the fitting accuracy is poor. When $\gamma$ and $M$ are sufficiently large (i.e., $\gamma = 14$ and $M = 1000$), TransNet accurately approximates the target function. However, it is clear that such computational scaling is very expensive and may even be unfeasible for higher-dimensional problems.

\section{Generalized Transferable Neural Networks}\label{sec:3}
To overcome the limitations of TransNet in handling high-frequency and large-gradient solutions, we propose an {\em a multi-hidden-layer version} of TransNet, called {``Generalized Transferable Neural Networks''} (GTransNet), which are aimed at improving and enhancing the applicability and robustness in solving steady-state PDE problems.

\subsection{Construction of GTransNet and  structure analysis}
Compared to TransNet, our proposed GTransNet incorporates additional hidden layers, consisting of two or more hidden layers. The first hidden layer basically follows the same sampling scheme for neuron weights as in TransNet, but the neuron biases are instead drawn from a centrally symmetric distribution. For the subsequent hidden layers, the weight parameters are predetermined through a carefully designed sampling strategy with variance control, and no bias terms are applied. The weights of the output layer are obtained in the same manner as in TransNet, by minimizing the residual-type loss function. Let us assume that the domain $\Omega \subset B_R(\mathbf{x_c})$.

The overall structure of GTransNet with $L\ge 2$ hidden layers (i.e., a fully connected neural network of $L+1$ layers) is formulated as follows:
\begin{equation}\label{eq:g_transnet}
    \begin{cases}
        \vpsi_1 = \tanh({\mathbf\Gamma}\odot(\bA(\mathbf{x}-\mathbf{x}_c) + R\br)), \\
        \vpsi_2 = \tanh(\bW_2\vpsi_1), \\
        \vdots\\
        \vpsi_L = \tanh(\bW_L\vpsi_{L-1}), \\
        u_{\mathrm{NN}}(\mathbf{x}) = \valpha^\top \vpsi_L,
    \end{cases}
\end{equation}
where 
$\vpsi_1=(\psi^{(1)}_1,\psi^{(2)}_1,\cdots,\psi^{(1)}_{N_1})\in \R^{N_1}$, $\bA = [\ba_1\, \ba_2\,\ldots\, \ba_{N_1}]^\top\in\R^{{N_1}\times d}$, ${\mathbf\Gamma}=(\gamma_1,\gamma_2,\cdots,\gamma_{N_1})\in \R^{N_1}$,  $\br = (r_1, r_2\ldots, r_{N_1})\in\R^{{N_1}}$, $\odot$ denotes the elementwise multiplication, and for $l=2, 3,\ldots, L$, 
$$
\vpsi_l=(\psi^{(l)}_1,\cdots,\psi^{(l)}_{N_l})\in \R^{N_l}, \quad
\bW_l=(W^{(l)}_{ij})\in \R^{N_l\times N_{l-1}}, 
$$ 
where $N_l$ denotes the number of neurons in the $l$-th hidden layer. Figure \ref{fig:GTransNet} illustrates the network architecture of the proposed GTransNet. 
Note that no bias terms are used in the second and subsequent hidden layers.

\begin{figure}[ht!]
    \centering
    \includegraphics[width=0.8\linewidth]{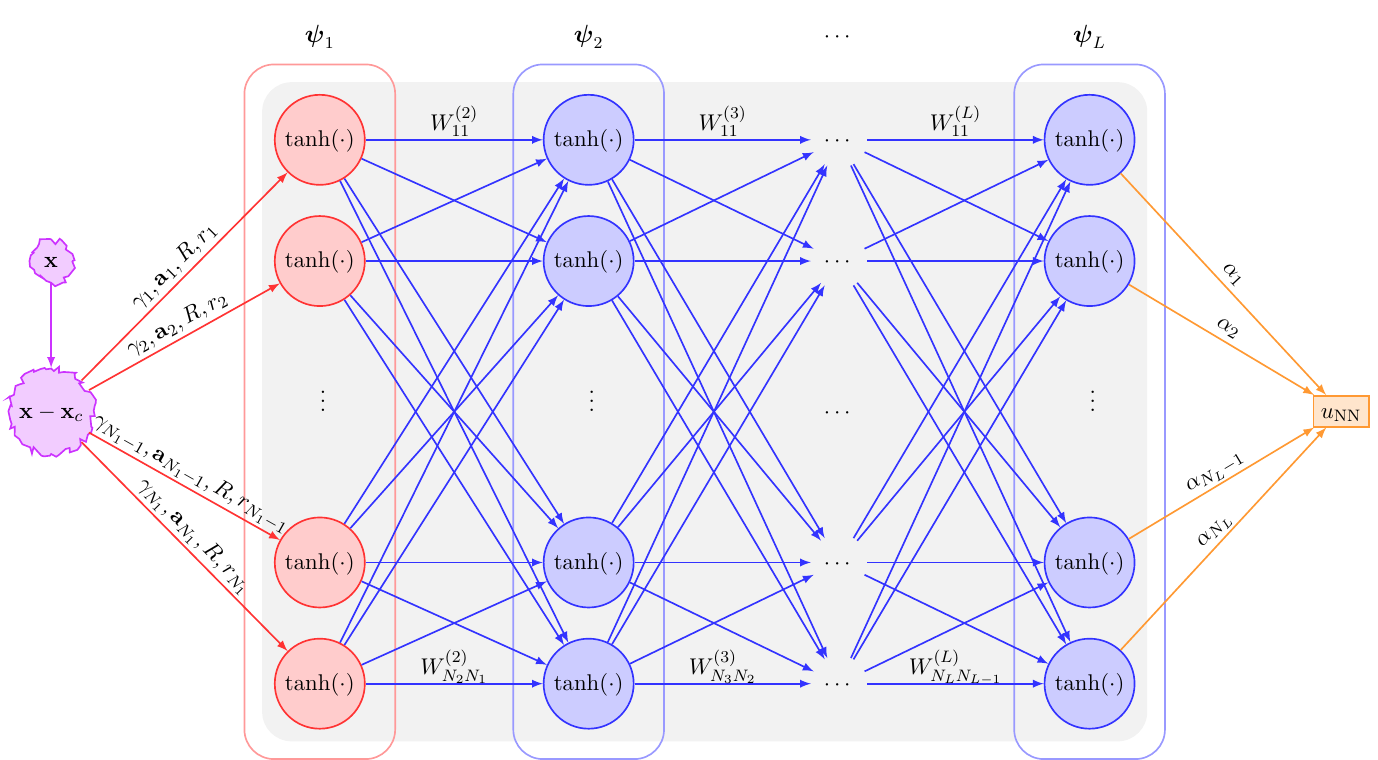}
     \vspace{0.1cm}
    \caption{Network architecture of the proposed GTransNet \eqref{eq:g_transnet} with $L$ hidden layers.}\vspace{-0.3cm}
    \label{fig:GTransNet}
\end{figure}

The GTransNet-based solver for the PDE problem \eqref{modelp} then leads to the following loss minimization problem: Given $\{{\mathbf\Gamma}, \bA, \br, \{\bW_i\}_{i=2}^{L}\}$ (i.e., the network parameters for all the $L$ hidden layers), find $u_{\mathrm{NN}}$ defined by \eqref{eq:g_transnet} such that
\begin{equation}\label{eq:optim_NN} \min_{\valpha}\; {\cal F}_{Loss}(u_{\mathrm{NN}};\,\{\valpha\}), \end{equation}
which is similar to \eqref{eq:optim_SN} for the TransNet solver.
For linear PDE problems, the solution of the optimization problem \eqref{eq:optim_NN} is again correspondingly equivalent to solving a linear system of size $K \times N_L$ in the least-squares sense similar to \eqref{eq:ls}.

In the first hidden layer of GTransNet defined in \eqref{eq:g_transnet}, we let the predetermination of the network parameters follow the sampling algorithms for TransNet introduced in the previous section. In particular, we use \eqref{eq:sample_a} to select the direction vectors $\{\ba_m\}_{m=1}^{N_1}$ and \eqref{unif2} for selecting the offset vector $\{r_m\}_{m=1}^{N_1}$ (i.e., with enforcement of central symmetry about zero).
Let us use the notation $X \stackrel{d}{=} Y$ to denote equality in distribution for the random variables $X$ and $Y$. It is easy to see that $\ba_m$ and $r_m$ are independent and satisfy
\begin{equation}\label{eq:CS}
\ba_m \stackrel{d}{=} -\ba_m, \qquad r_m \stackrel{d}{=} -r_m,
\end{equation}
for each $m = 1, 2, \cdots, N_1$.

\begin{lemma}[Zero Mean of the First Hidden Layer]\label{lem:1}
For any fixed $\bx \in \mathbb{R}^d$, the random variable $\psi^{(1)}_m$ defined in \eqref{eq:g_transnet} satisfies
\begin{equation}\label{eq:CS_psi}
\psi^{(1)}_m(\bx) \stackrel{d}{=} -\psi^{(1)}_m(\bx),\qquad m=1,2\cdots, {N_1},
\end{equation}
and consequently, 
\begin{equation}
\mathbb{E}[\vpsi_1(\bx)] = \mathbf{0}.
\end{equation}
\end{lemma}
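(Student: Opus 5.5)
The plan is to exploit the joint central symmetry of the pair $(\ba_m, r_m)$ together with the oddness of $\tanh$. Fix $\bx\in\R^d$ and write the pre-activation of the $m$-th first-layer neuron as
\[
z_m(\bx) = \gamma_m\bigl(\ba_m^\top(\bx-\bx_c) + R r_m\bigr),
\]
with $\gamma_m>0$, $\bx$, $\bx_c$ and $R$ deterministic. Note that $z_m(\bx) = h(\ba_m, r_m)$, where $h(\ba,r)=\gamma_m(\ba^\top(\bx-\bx_c)+Rr)$ is a linear map. Hence $h(-\ba,-r) = -h(\ba,r)$.

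The first step is to upgrade the marginal symmetries in \eqref{eq:CS} to a joint symmetry. Since $\ba_m$ and $r_m$ are independent, their joint law is the product of the marginal laws. By \eqref{eq:CS} each marginal is invariant under negation, so the product law is invariant under $(\ba,r)\mapsto(-\ba,-r)$. That is, $(\ba_m,r_m)\stackrel{d}{=}(-\ba_m,-r_m)$. This is the only point where independence is genuinely needed: symmetry of each marginal alone would not suffice without it. I would state this step explicitly, since it is the main thing to get right.

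The second step pushes the joint symmetry forward through measurable functions. Applying $h$ gives
\[
z_m(\bx) = h(\ba_m,r_m) \stackrel{d}{=} h(-\ba_m,-r_m) = -z_m(\bx).
\]
Since $\tanh$ is odd and continuous,
\[
\psi^{(1)}_m(\bx) = \tanh(z_m(\bx)) \stackrel{d}{=} \tanh(-z_m(\bx)) = -\psi^{(1)}_m(\bx),
\]
which is \eqref{eq:CS_psi}.

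For the mean, note that $|\psi^{(1)}_m(\bx)|\le 1$, so the expectation exists. Equality in distribution then gives $\mathbb{E}[\psi^{(1)}_m(\bx)] = \mathbb{E}[-\psi^{(1)}_m(\bx)] = -\mathbb{E}[\psi^{(1)}_m(\bx)]$, hence $\mathbb{E}[\psi^{(1)}_m(\bx)]=0$. Applying this componentwise for $m=1,\dots,N_1$ yields $\mathbb{E}[\vpsi_1(\bx)]=\mathbf{0}$.
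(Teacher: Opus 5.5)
Your proof is correct and follows essentially the same route as the paper. Both arguments derive the joint symmetry $(\ba_m,r_m)\stackrel{d}{=}(-\ba_m,-r_m)$ from independence, push it through the odd map $(\ba,r)\mapsto\tanh(\gamma_m(\ba^\top(\bx-\bx_c)+Rr))$, and then use boundedness to conclude that the mean vanishes; yours is slightly more careful in justifying the joint symmetry via the product law and in keeping the shift $\bx_c$, which the paper's function $g$ silently drops.
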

\begin{proof}
Since $\ba_m$ and $r_m$ are independent and each is centrally symmetric about zero, their joint distribution satisfies
\begin{equation}\label{eq:CS1}
(\ba_m, r_m) \stackrel{d}{=} (-\ba_m, -r_m).
\end{equation}
Define the Borel measurable function $g(\ba_m,r_m)= \tanh(\gamma_m(\ba_m^\top \bx + Rr_m))$. Using the oddness of $\tanh$, we have the pointwise identity
\begin{equation}
\begin{aligned}
g(-\ba_m, -r_m) &= \tanh(\gamma_m((-\ba_m)^\top \bx + R(-r_m))) 
\\
&= -\tanh(\gamma_m(\ba_m^\top \bx + Rr_m)) = -g(\ba_m,r_m).
\end{aligned}
\end{equation}
By the standard property that equality in distribution is preserved under Borel measurable functions, we obtain
\[
\psi^{(1)}_m(\bx) = g(\ba_m,r_m) \stackrel{d}{=} g(-\ba_m,-r_m) = -\psi^{(1)}_m(\bx),
\]
thus $\psi_m(\bx)$ is centrally symmetric about zero. 
Since $\tanh$ is bounded, the expectation exists and satisfies
\[
\mathbb{E}[\psi^{(1)}_m(\bx)] = \mathbb{E}[-\psi^{(1)}_m(\bx)] = -\mathbb{E}[\psi^{(1)}_m(\bx)],
\]
and consequently  $\mathbb{E}[\psi^{(1)}_m(\bx)] = 0$, which immediately gives $\mathbb{E}[\vpsi(\bx)] = \mathbf{0}$ by taking $m$ from 1 to ${N_1}$.
\end{proof}

Next, we discuss the sampling strategy for the weight matrices $\{\bW_l\}_{l=2}^{L}$ in the subsequent hidden layers of GTransNet \eqref{eq:g_transnet}, which is partially inspired by the Xavier method \cite{Xavier}. The Xavier method is widely used for weight initialization in neural networks. It adjusts the initial values of the weights based on the number of neurons in the input and output layers to ensure that the signal variance remains consistent across each layer during both forward and backward propagation.
Specifically, for $l = 2, \dots, L$, we sample the weight matrices independently according to the following Gaussian distribution:
\begin{equation}\label{eq:sample_W}
W_{ij}^{(l)} \sim \mathcal{N} \left(0,\sigma_l^2\right)
\quad{\rm with}\;\;
\sigma_l = \textstyle\sqrt{\frac{\delta}{N_{l-1}}},
\end{equation}
where $0 < \delta \le 1$ is a variance control parameter, $\sigma_l^2$ represents the weight variance at $l$-th layer, and $N_{l-1}$ is the number of nodes in the $(l-1)$-th layer.

Let us denote the pre-activation $\vphi_l=\bW_l\vpsi_{l-1}$ for $l\ge 2$ then $\vpsi_l=\tanh(\vphi_l)$. 
We then have the following result regarding the GTransNet architecture  \eqref{eq:g_transnet} with the network parameters of its hidden layers predetermined through 
\eqref{eq:sample_a},  \eqref{unif2} and \eqref{eq:sample_W}.

\begin{theorem}[Zero Mean Propagation in GTransNet]\label{lem:CS} 
The means of  neuron pre-activations and activations in  the subsequent hidden layers of GTransNet \eqref{eq:g_transnet}  are all zeros, i.e.,
\begin{equation}\label{eq:E_phi}
\mathbb{E}[\vphi_l] = \mathbf{0}, \qquad l=2,\dots,L,
\end{equation}
and
\begin{equation}\label{eq:E_psi}
\mathbb{E}[\vpsi_l] = \mathbf{0}, \qquad l=2,\dots,L.
\end{equation}
\end{theorem}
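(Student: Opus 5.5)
We argue by induction on $l$, using the independence of each $\bW_l$ from the lower-layer parameters together with the oddness of $\tanh$. Fix $\bx$. All parameters $\{\bA,\br\}$, $\bW_2,\dots,\bW_L$ are mutually independent, and $\vpsi_{l-1}$ is a measurable function of $(\bA,\br,\bW_2,\dots,\bW_{l-1})$ only. Hence $\vpsi_{l-1}$ is independent of $\bW_l$.

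\emph{The pre-activation mean.} For $\vphi_l=\bW_l\vpsi_{l-1}$, each component is $\phi^{(l)}_i=\sum_j W^{(l)}_{ij}\psi^{(l-1)}_j$. Since $|\psi^{(l-1)}_j|\le 1$ and $W^{(l)}_{ij}$ is Gaussian, every term is integrable. Independence then gives
\[
\mathbb{E}[W^{(l)}_{ij}\psi^{(l-1)}_j]=\mathbb{E}[W^{(l)}_{ij}]\,\mathbb{E}[\psi^{(l-1)}_j]=0 .
\]
So $\mathbb{E}[\vphi_l]=\mathbf{0}$ holds for every $l\ge2$ regardless of the mean of $\vpsi_{l-1}$. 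This already proves \eqref{eq:E_phi}, and Lemma \ref{lem:1} is not even needed for it.

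\emph{The activation mean.} Knowing $\mathbb{E}[\vphi_l]=\mathbf{0}$ does not give $\mathbb{E}[\tanh(\vphi_l)]=\mathbf{0}$, because $\tanh$ is nonlinear. We therefore prove the stronger statement $\vphi_l\stackrel{d}{=}-\vphi_l$ as random vectors. The key observation is that the law of $\bW_l$ is symmetric, $\bW_l\stackrel{d}{=}-\bW_l$, and $\bW_l$ is independent of $\vpsi_{l-1}$. Hence
\[
(\bW_l,\vpsi_{l-1})\stackrel{d}{=}(-\bW_l,\vpsi_{l-1}),
\]
and applying the Borel map $(W,v)\mapsto Wv$ yields $\vphi_l\stackrel{d}{=}-\vphi_l$. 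Conditioning on $\vpsi_{l-1}$ gives the same conclusion: given $\vpsi_{l-1}$, $\vphi_l$ is a centered Gaussian vector, so its law is symmetric.

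Applying the odd Borel map $\tanh$ componentwise, as in the proof of Lemma \ref{lem:1}, we get $\vpsi_l\stackrel{d}{=}-\vpsi_l$. Since $\vpsi_l$ is bounded, its expectation exists and satisfies $\mathbb{E}[\vpsi_l]=-\mathbb{E}[\vpsi_l]$, hence $\mathbb{E}[\vpsi_l]=\mathbf{0}$, which is \eqref{eq:E_psi}. The induction is carried only to track independence; the symmetry argument itself uses only the symmetry of $\bW_l$.

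\emph{Main obstacle.} The delicate step is the passage from zero mean of $\vphi_l$ to zero mean of $\vpsi_l$. This needs central symmetry of the full distribution, which in turn rests on the joint-distribution identity $(\bW_l,\vpsi_{l-1})\stackrel{d}{=}(-\bW_l,\vpsi_{l-1})$, and hence on the independence of $\bW_l$ from the earlier layers. One could alternatively propagate symmetry of $\vpsi_{l-1}$ inductively from Lemma \ref{lem:1}, but the sign-flip of $\bW_l$ is cleaner.
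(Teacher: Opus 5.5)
Your proof is correct and follows the paper's argument. Both rest on the identity $(\bW_l,\vpsi_{l-1})\stackrel{d}{=}(-\bW_l,\vpsi_{l-1})$, which comes from independence and the symmetry of $\bW_l$, pushed through $(W,v)\mapsto Wv$ and then the odd map $\tanh$. Your direct computation of $\mathbb{E}[\vphi_l]$ and explicit integrability check are small refinements, and you are right that Lemma~\ref{lem:1} is not needed, since the paper's sign-flip argument never uses it either.
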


\begin{proof} 
Note Lemma~\ref{lem:1} gives 
$\mathbb{E}[\vpsi_1] = \mathbf{0}.$
Based on the definition of the weight matrices $\{\bW_l\}_{l=2}^{L}$ in \eqref{eq:sample_W}, it is easy to find that 
$\bW_l$ is centrally symmetric about $\mathbf 0$ (i.e., $\bW_l \stackrel{d}{=} -\bW_l$)
and independent of the random vector $\vpsi_{l-1}$, consequently the joint distribution satisfies
\[
(-\bW_l, \vpsi_{l-1}) \stackrel{d}{=} (\bW_l, \vpsi_{l-1}).
\] 
By the property that equality in distribution is preserved under Borel measurable functions and the function $f(\bW_l, \vpsi_{l-1}) = \bW_l \vpsi_{l-1}$ is a Borel measurable function, it follows that
\[
f(\bW_l, \vpsi_{l-1}) \stackrel{d}{=}  f(-\bW_l, \vpsi_{l-1}),
\]
implying that $\bW_l \vpsi_{l-1}$ is centrally symmetric about $\mathbf{0}$, 
i.e., 
\begin{equation*}
\vphi_{l} \stackrel{d}{=} -\vphi_{l}, \quad l=2,\cdots,L,
\end{equation*}
which immediately implies \eqref{eq:E_phi}. 
Since $\bW_l\vpsi_{l-1}$ is centrally symmetric about zero and $\tanh$ is an odd function, we further obtain \eqref{eq:E_psi} and complete the proof. 
\end{proof}

\begin{theorem}[Controlled Variance Propagation in GTransNet]\label{thm:1}
Assume that each component of $\vpsi_1$ has a variance not more than $\sigma_0^2$. 
Then the variances of the neuron activations  in  the subsequent hidden layers of GTransNet \eqref{eq:g_transnet}  satisfy
\begin{equation}\label{eq:var_psi}
\mathrm{Var}[\psi_i^{(l)}] \le \delta^{\,l-1}\sigma_0^2,
\qquad i=1,\dots,N_l,\quad l=2,\dots,L.
\end{equation} 
\end{theorem}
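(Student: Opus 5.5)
We argue by induction on $l$, using Theorem~\ref{lem:CS} to replace variances by second moments, and propagating a bound on second moments from one layer to the next. Write $s_l:=\max_{i}\mathbb{E}[(\psi^{(l)}_i)^2]$. By Lemma~\ref{lem:1} and Theorem~\ref{lem:CS} all activations have zero mean, so $\mathrm{Var}[\psi^{(l)}_i]=\mathbb{E}[(\psi^{(l)}_i)^2]$. The hypothesis then reads $s_1\le\sigma_0^2$, and the claim is $s_l\le\delta^{l-1}\sigma_0^2$.

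The inductive step has two parts. First we bound the pre-activation variance, then we pass through the activation.

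\textbf{Pre-activation.} Fix $l\ge2$ and write $\varphi^{(l)}_i=\sum_{j=1}^{N_{l-1}}W^{(l)}_{ij}\psi^{(l-1)}_j$. The entries $W^{(l)}_{ij}$ are independent of each other and of $\vpsi_{l-1}$, since $\vpsi_{l-1}$ depends only on $\bA,\br,\bW_2,\dots,\bW_{l-1}$. They also have mean zero. Hence all cross terms in $\mathbb{E}[(\varphi^{(l)}_i)^2]$ vanish, because
\[
\mathbb{E}\bigl[W^{(l)}_{ij}W^{(l)}_{ik}\psi^{(l-1)}_j\psi^{(l-1)}_k\bigr]=\mathbb{E}\bigl[W^{(l)}_{ij}W^{(l)}_{ik}\bigr]\,\mathbb{E}\bigl[\psi^{(l-1)}_j\psi^{(l-1)}_k\bigr]=0 \quad (j\neq k).
\]
The diagonal terms give
\[
\mathrm{Var}[\varphi^{(l)}_i]=\sum_{j}\sigma_l^2\,\mathbb{E}\bigl[(\psi^{(l-1)}_j)^2\bigr]\le N_{l-1}\cdot\frac{\delta}{N_{l-1}}\,s_{l-1}=\delta\, s_{l-1}.
\]
The point of the choice \eqref{eq:sample_W} is exactly this cancellation of $N_{l-1}$. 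No independence between different neurons $\psi^{(l-1)}_j$ is needed, only independence of the fresh weights from the previous layer.

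\textbf{Activation.} Since $\tanh$ is $1$-Lipschitz with $\tanh(0)=0$, we have $|\tanh(t)|\le|t|$. Therefore
\[
\mathbb{E}\bigl[(\psi^{(l)}_i)^2\bigr]\le\mathbb{E}\bigl[(\varphi^{(l)}_i)^2\bigr]\le\delta\, s_{l-1}.
\]
This gives $s_l\le\delta s_{l-1}$, and iterating from $s_1\le\sigma_0^2$ yields \eqref{eq:var_psi}. The zero-mean identity from Theorem~\ref{lem:CS} converts the second-moment bound back into a variance bound.

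The only delicate point is justifying the vanishing of cross terms and the factorization $\mathbb{E}[W^2\psi^2]=\sigma_l^2\,\mathbb{E}[\psi^2]$. Both rest on the independence of $\bW_l$ from $\vpsi_{l-1}$, which should be stated explicitly from the sampling construction. The expectations are finite because $|\psi|\le1$ and the Gaussian weights have finite second moments. Once this is in place, the rest is routine. The bound $|\tanh t|\le|t|$ avoids any need to know the distribution of $\varphi^{(l)}_i$.
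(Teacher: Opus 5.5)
Your proof is correct and follows the same route as the paper. The cross terms vanish by independence and zero mean of the weights, so $N_{l-1}\sigma_l^2=\delta$ gives $\mathrm{Var}[\varphi_i^{(l)}]\le\delta\,\max_j\mathbb{E}[(\psi_j^{(l-1)})^2]$, and $|\tanh t|\le|t|$ carries the bound through the activation, layer by layer. Your version is slightly tidier in two places: you state the cross-term cancellation explicitly, and by working with $\max_i\mathbb{E}[(\psi_i^{(l)})^2]$ you avoid the paper's loose equality $\sum_j\mathrm{Var}[W_{ij}^{(l)}]\mathrm{Var}[\psi_j^{(l-1)}]=N_{l-1}\sigma_l^2\mathrm{Var}[\psi_j^{(l-1)}]$, which treats all components as having equal variance.
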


\begin{proof}

Combining  \eqref{eq:E_phi}-\eqref{eq:E_psi} with the independence of $\bW_l$ and $\vpsi_{l-1}$, and by noting $\mathbb{E}\left[W_{ij}^{(l)}\right] = \mathbb{E}\left[\psi_{j}^{(l-1)}\right] = 0$,  we get 
\begin{equation}\label{eq:var_phi} 
   \begin{aligned}
       \mathrm{Var}\left[\phi_i^{(l)}\right] &= \mathbb{E}\left[\left(\phi_i^{(l)}\right)^2\right] - \mathbb{E}\left[\phi_i^{(l)}\right]^2 
       = \textstyle\mathbb{E}\left[\left(\sum\limits_{j=1}^{N_{l-1}} W_{ij}^{(l)}\psi_{j}^{(l-1)}\right)^2\right] \\
       &=\textstyle \mathbb{E}\left[\sum\limits_{j=1}^{N_{l-1}} \left(W_{ij}^{(l)}\right)^2 \left(\psi_{j}^{(l-1)}\right)^2\right] 
       = \textstyle\sum\limits_{j=1}^{N_{l-1}} \mathbb{E}\left[\left(W_{ij}^{(l)}\right)^2\right] \mathbb{E}\left[\left(\psi_{j}^{(l-1)}\right)^2\right]\\
       &= \sum_{j=1}^{N_{l-1}}\mathrm{Var}\left[W_{ij}^{(l)}\right]\mathrm{Var}\left[\psi_j^{(l-1)}\right],
   \end{aligned}
   \end{equation}
for  $i=1,\dots,N_l$ and $l=2,\dots,L$.
If each component of $\vpsi_1$ has a variance not more than $\sigma_0^2$, then for the second hidden layer we have from \eqref{eq:sample_W} that
\begin{equation}
\mathrm{Var}\left[\phi_i^{(2)}\right] 
= \textstyle\sum\limits_{j=1}^{N_1}\mathrm{Var}\left[W_{ij}^{(2)}\right]\mathrm{Var}\left[\psi_j^{(1)}\right]
= N_{1} \sigma_2^2 \mathrm{Var}\left[\psi_{j}^{(1)}\right]
        \leq \delta \sigma_0^2.
\end{equation}
Using the fact that $|\tanh(x)| \le |x|$ for any $x\in{\mathbf R}$ and $\mathbb{E}[\vpsi_l]=0$, we obtain 
\begin{equation}\label{eq:var_psi1}
\begin{aligned}
\mathrm{Var}\left[\psi_i^{(2)}\right]
&= \mathbb{E}\left[\left(\psi_i^{(2)}\right)^2\right]
= \mathbb{E}\left[\tanh^2\left(\phi_i^{(2)}\right)\right] \\
&\le \mathbb{E}\left[\left(\phi_i^{(2)}\right)^2\right]
= \mathrm{Var}\left[\phi_i^{(2)}\right] 
\leq \delta \sigma_0^2.
\end{aligned}
\end{equation}
Similarly, for the third hidden layer we have from 
\eqref{eq:var_psi1} that
\begin{equation}
\mathrm{Var}\left[\phi_i^{(3)}\right]
= \textstyle\sum\limits_{j=1}^{N_2}\mathrm{Var}\left[W_{ij}^{(3)}\right]\mathrm{Var}\left[\psi_j^{(2)}\right]
= N_2 \sigma_3^2 \mathrm{Var}\left[\psi_i^{(2)}\right]
\le \delta(\delta \sigma_0^2)= \delta^2 \sigma_0^2
\end{equation}
and consequently
\begin{equation}
\begin{aligned}
\mathrm{Var}\left[\psi_i^{(3)}\right]
&= \mathbb{E}\left[\left(\psi_i^{(2)}\right)^2\right]
\le \mathbb{E}\left[\left(\phi_i^{(2)}\right)^2\right]
= \mathrm{Var}\left[\phi_i^{(2)}\right] 
\leq \delta^2 \sigma_0^2.
\end{aligned}
\end{equation}
By repeating this argument sequentially across all  hidden layers in the same manner, we arrive at the final result \eqref{eq:var_psi} and the proof is completed.
\end{proof}

\subsection{Quantitative study of the neuron behaviors in GTransNet}
For the first hidden layer of the proposed GTransNet \eqref{eq:g_transnet}, the sampling strategy of the direction vector $\mathbf{a}_m$ is identical to that in TransNet; the minor difference lies in how $r_m$ is sampled. In the classic algorithm for TransNet, the selection of the offset parameter $r_m$ is based on the approach \eqref{unif1}, i.e., it is uniformly sampled from the interval $[0,1]$. The corresponding distribution histograms of the hidden-layer neuron activation values were already shown in Figure \ref{fig:density_transnet}, where the distribution is evidently not centrally symmetric. In contrast, GTransNet uses the alternate approach \eqref{unif2}, i.e., sampling $r_m$ from a uniform distribution $\mathcal{U}(-1,1)$. In the following study, as in \eqref{ssp} we again assume the shape parameters of all first hidden-layer neurons take the same value in \eqref{eq:g_transnet}, i.e., $\gamma_m = \gamma$ for $m = 1,2,\ldots,N_1$.

The distribution histograms of the first hidden-layer neuron activation values in GTransNet are shown in the first row of Figure \ref{fig:density_gtransnet}, displaying clear central symmetry, which agrees with the conclusion of equation \eqref{eq:CS_psi} in Lemma \ref{lem:1}. However, it is also easy to find that both sampling schemes suffer from a common drawback -- the distributions exhibit large variance, and most neuron values are concentrated near the saturation regions of the activation function. This phenomenon becomes more pronounced as the shape parameter $\gamma$ increases.

\begin{figure}[htbp]
    \centering
    \includegraphics[width=0.9\linewidth]{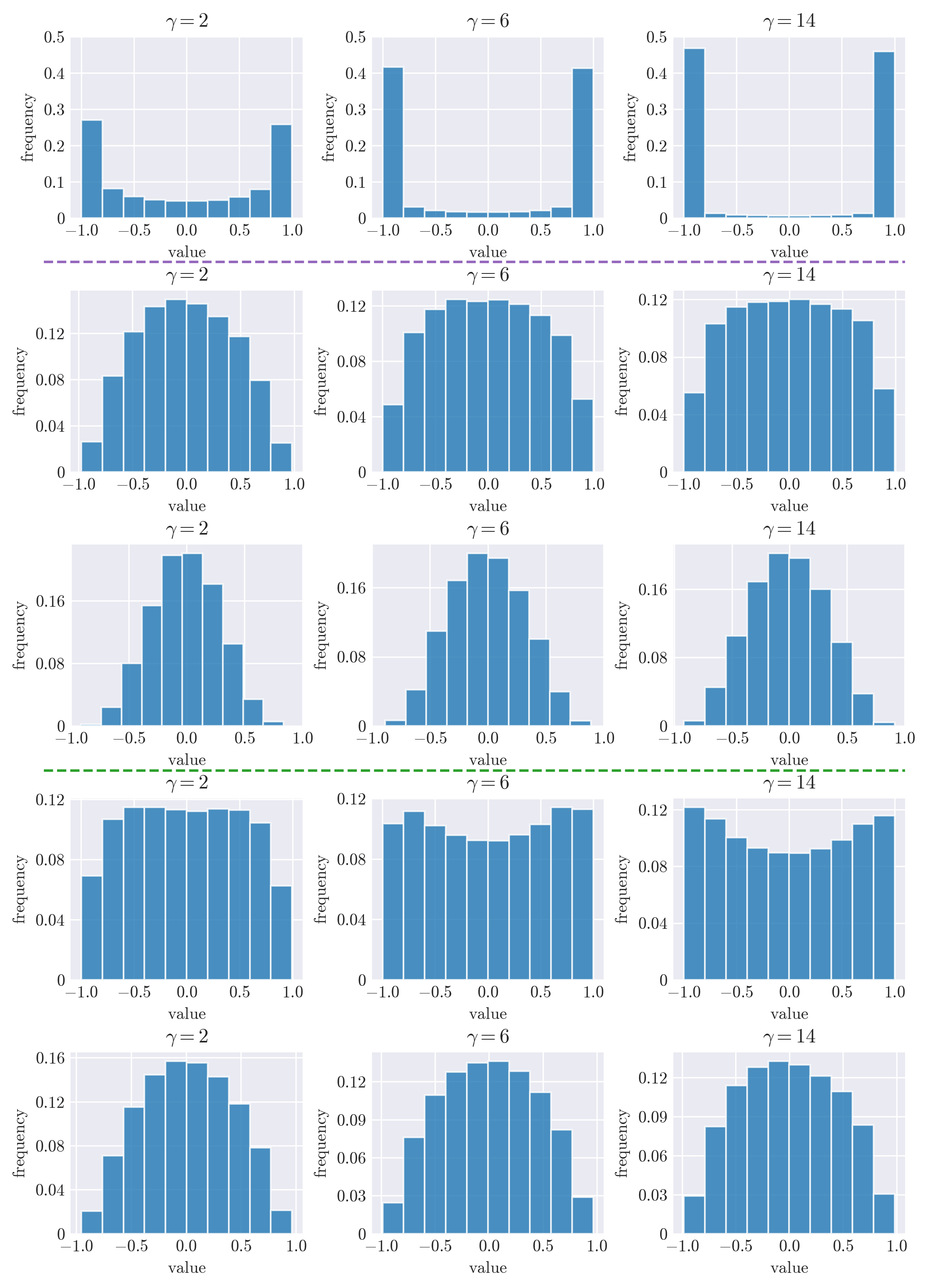}
     \vspace{-0.2cm}
    \caption{Distribution histograms of the hidden-layer neuron activation values in the proposed GTransNet associated with $B_{1.5}(\mathbf{0})$ in two dimensions.  Activation values are collected from 500 input points over the unit square $(-1,1)^2$, with each point corresponding to 2000 different neurons in the first hidden layer and 1000 neurons in the second and third hidden layers. From left to right: $\gamma=2,6, 14.$
    First row: the first hidden layer $\vpsi_1$; Second and third rows: the second and third hidden layers $\vpsi_2$ and $\vpsi_3$ of the GTransNet with $\delta=0.5$;
   Fourth and fifth rows: the second and third hidden layers $\vpsi_2$ and $\vpsi_3$ of the GTransNet with $\delta=0.8$.
}\vspace{-0.3cm}
    \label{fig:density_gtransnet}
\end{figure}

To alleviate this issue, we introduce additional hidden layers into the network and sample its weight matrices according to \eqref{eq:sample_W} within the framework of our GTransNet. As indicated by Theorem \ref{thm:1}, properly choosing the parameter $0<\delta\leq 1$ effectively reduces the variance of neuron values in the second and subsequent hidden layers, preventing them from clustering around the saturation regions. The second to fifth rows of Figure \ref{fig:density_gtransnet} investigate how the response patterns and concentration of the hidden-layer neurons vary with the shape parameter $\gamma$ and the network depth: the distribution histograms of the second and third hidden-layer neuron activation values gradually approach a Gaussian distribution and more frequently fall within the non-saturated region, thereby significantly enhancing the model’s expressive capability.
Furthermore, as $\delta$ decreases and the network depth increases, the distributions become more concentrated near zero. This observation suggests that when $\delta$ is small, the GTransNet architecture should not be designed with too many hidden layers, since this would lead to neuron values being overly concentrated near zero.

Moreover, the first two rows of Figure \ref{fig:GTransNet_basis} visualize some sample neurons from the second hidden layer in the proposed GTransNet. The first row corresponds to sampling $\mathbf{W}_2$ directly from a standard Gaussian distribution $\mathcal{N}(0,1)$, while the second row corresponds to our variance-controlled sampling strategy \eqref{eq:sample_W} with $\delta = 0.5$. The results demonstrate that general Gaussian sampling could cause neurons to nearly collapse into saturation zones under large $\gamma$, whereas our proposed sampling strategy \eqref{eq:sample_W} can effectively avoid this issue.
As $\gamma$ increases, the second hidden-layer neurons of GTransNet gradually exhibit distinct  high-frequency and multiscale features uniformly distributed across the whole domain, which is particularly beneficial for handling problems with high-frequency solutions. The third row of Figure \ref{fig:GTransNet_basis} shows some sample neurons from the third hidden layer produced using the sampling strategy \eqref{eq:sample_W}, and we again observe similar but more pronounced neural features compared with those in the second hidden layer.

\begin{figure}[!t]
    \centering
    \includegraphics[width=0.9\linewidth]{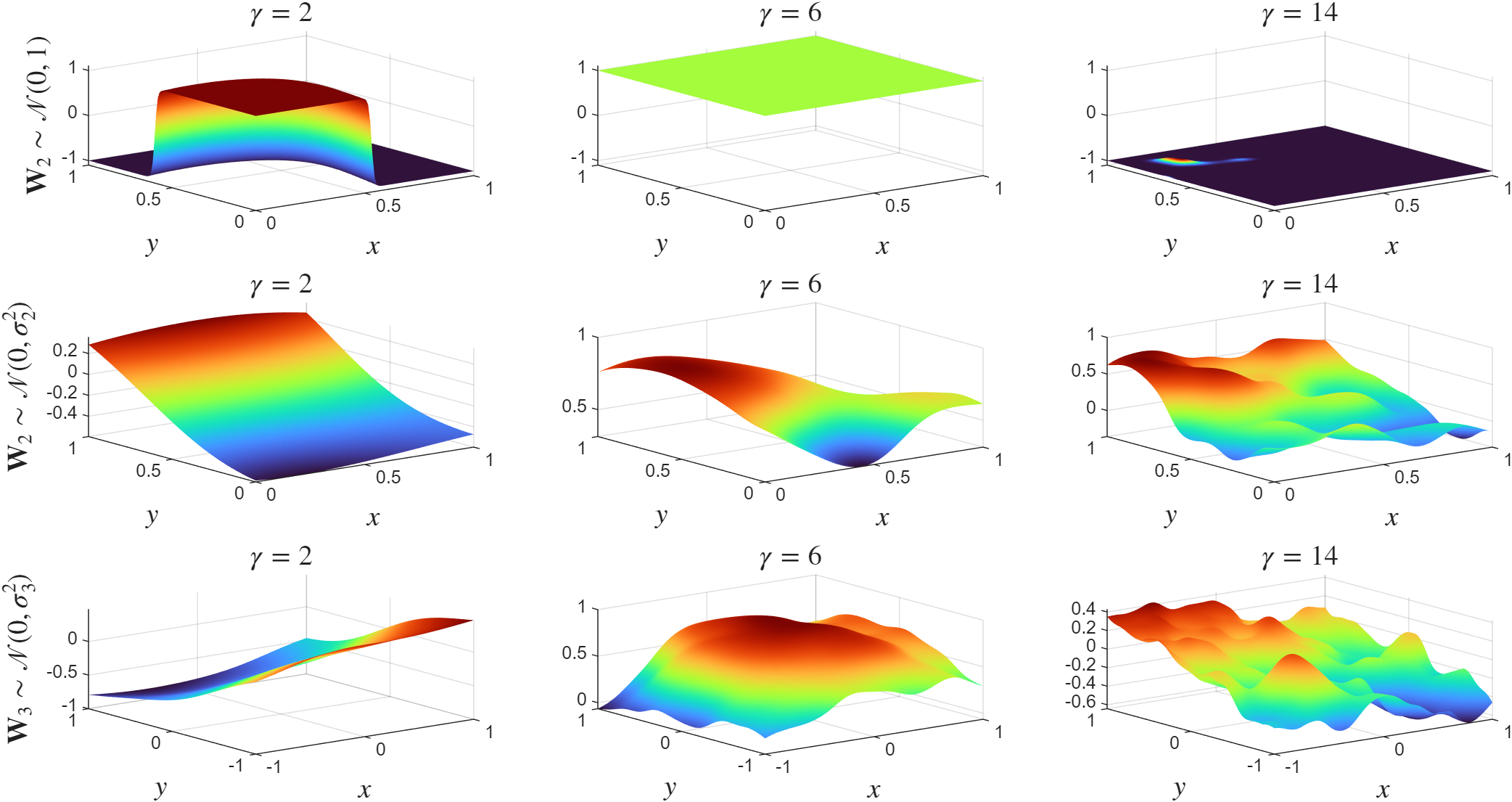}
    \caption{Visualization of some typical second and third hidden-layer neurons, $\psi^{(2)}_m(\bx)$ and $\psi^{(3)}_m(\bx)$, in the proposed GTransNet. From left to right: $\gamma=2,6, 14.$
   Top row: $\psi^{(2)}_m(\bx)$ with $\mathbf{W}_2\sim\mathcal{N}(0,1)$; Middle row: $\psi^{(2)}_m(\bx)$ with $\mathbf{W}_2\sim\mathcal{N}(0,\sigma_2^2)$ and $\delta = 0.5$; Bottom row: $\psi^{(3)}_m(\bx)$ with $\mathbf{W}_3\sim\mathcal{N}(0,\sigma_3^2)$ and $\delta = 0.5$.}\vspace{-0.3cm}
    \label{fig:GTransNet_basis}
\end{figure}

In summary, simply increasing the value of the shape parameter $\gamma$ in TransNet causes the non-saturated region to narrow rapidly; as a result, a large number of hidden-layer neurons is required to adequately capture high-frequency features -- an approach that becomes infeasible in high-dimensional settings. In contrast, our proposed GTransNet retains the feature extraction mechanism of TransNet while introducing additional hidden layers with a well-designed weight sampling strategy, which effectively controls the variances and regulates the neuron distribution.
Especially under large $\gamma$, the additional layers can produce  neurons with desired features that are nicely distributed within the non-saturated region. These layers therefore serve as feature enhancement layers, substantially improving both the generalization and expressive power of the neural network model.

\begin{remark}
It is worth noting that although the proposed GTransNet framework allows the construction of deeper architectures, experimental results (see Subsection 4.1) show that using three or more hidden layers may not provide additional accuracy gains compared with two hidden layers; instead, it could substantially increase the overall computational cost due to the need to evaluate more derivatives when assembling the target least-squares system. On the other hand, compared with TransNet, introducing one additional hidden layer and incorporating the proposed weight–presetting strategy enables more effective extraction of  high-frequency and multiscale features. Hence, a two-hidden-layer GTransNet achieves a superior balance between accuracy and efficiency, making it a more reasonable and effective choice.
\end{remark}

\section{Numerical Experiments}\label{sec:4}
In this section, we numerically demonstrate the accuracy, efficiency, and robustness of the proposed GTransNet \eqref{eq:g_transnet} on solving the steady-state PDE problems \eqref{modelp}, especially the ones with highly oscillatory solutions. The problems cover five different two-dimensional computational domains and two three-dimensional domain:
\begin{itemize}
\item A 2D unit square domain $\Omega_1 = (0,1)\times (0,1)$.
\item A 2D shifted unit square domain $\Omega_2 = (4,5)\times (7,8)$.
\item A 2D kite-shape domain $\Omega_3$ with  its boundary
$\partial\Omega_3
= \{(x,y)\,|\, x = 0.5\cos \theta + 0.3\cos(2\theta) - 0.2, ~y = 0.6\sin \theta,~ 0\leq\theta<2\pi\}.$
\item A 2D flower-shape domain $\Omega_4$ with its boundary 
$\partial\Omega_4
= \{(x,y)\,|\, x = (0.5 - 0.1\cos(6\theta))\cos \theta, ~y = (0.5 - 0.1\cos(6\theta))\sin \theta,~ 0\leq\theta<2\pi\}.$
\item A 2D L-shaped domain $\Omega_5 = (0,1)\times (0,1) \backslash [0.5,1] \times [0.5,1].$
\item A 3D ball domain with radius 0.5 centered at the origin 
$\Omega_6 = \{(x,y,z)\mid x^2 + y^2 + z^2 < 0.5^2\}.$ 
\item A 3D box domain $\Omega_7=(0,1)\times (0,1)\times (0,1)$.
\end{itemize}
To ensure complete coverage of each target domain, the ball $B_R(\mathbf{x_c})$ used in TransNet and GTransNet are centered at $(0.5,0.5)$, $(5.5,6.5)$, $(0,0)$, $(0,0)$, $(0.5,0.5)$, $(0,0,0)$, and $(0.5,0.5,0.5)$ for $\Omega_1$ through $\Omega_7$, respectively, with a fixed radius $R=0.8$ maintained across all cases.
For simplicity, we also let the shape parameters of all first hidden-layer neurons take the same value $\gamma$.

To evaluate the numerical accuracy, we introduce the relative \(L^2\) error:
\[
\frac{\|u - u_{\mathrm{NN}}\|_{L^2}}{\|u\|_{L^2}},
\]
where \(u_{\mathrm{NN}}\) denotes the numerical solution obtained using the GTransNet or TransNet method, and \(u\) is the exact solution. 
The numerical errors are computed with  \(K_{\mathrm{test}} = 10^4\) test points. The number of neurons in the single hidden layer of TransNet \eqref{snn} is set to be equal to that of the last hidden layer in GTransNet, i.e., \(M=N_L:=N\), so that their resulting least squares systems  are of the same size in  comparison. The variance control parameter in GTransNet is chosen as  $\delta = 0.5$ in \eqref{eq:sample_W}.

All experiments are conducted on a Windows system equipped with an Nvidia RTX 2080 Ti GPU. PyTorch’s \verb|torch.linalg.lstsq| API is used to solve the least-squares problems with double-precision floating-point arithmetic. All error results are obtained by repeating each experiment 20 times.

\subsection{Examples with smooth solutions}\label{sexp}

We evaluate the efficiency and accuracy of GTransNet by comparing it with TransNet on the following three problems with smooth solutions. Dirichlet boundary conditions are imposed for all these problems.
\begin{itemize}
\item[]
\textbf{(S1).} {The Poisson equation} on the 2D shifted unit square domain $\Omega_2$:
 \begin{equation}\label{eq:Poisson}
     -\Delta u  = f, \quad (x,y)\in \Omega_2
 \end{equation}
with  the exact solution  
\begin{equation}
 u = \sin(2\pi x) \sin(2\pi y) + e^{-x - y}.
\end{equation}
\item[] 
\textbf{(S2).} {The Poisson equation} on the 3D ball domain $\Omega_6$ with the exact solution 
\begin{equation}
u = \sin(\pi x) \sin(\pi y) \sin(\pi z).\end{equation}
\item[] 
\textbf{(S3).} {The steady-state Navier-Stokes equations} on the 2D unit square domain $\Omega_1$:
\begin{equation}\label{eq:NS}
\begin{cases}
    -\nu\Delta \mathbf{u} + (\mathbf{u}\cdot\nabla)\mathbf{u} + \nabla p = \mathbf{f}, & \; (x,y)\in \Omega_1, \\
    \nabla \cdot \mathbf{u} = 0, & \; (x,y) \in \Omega_1,
\end{cases}
\end{equation}
where \( \mathbf{u} = (u, v) \) denotes the velocity field and $p$ the pressure, the kinetic viscocity \(\ds \nu = 0.1\), and the exact solution is given by 
\begin{equation}
u = e^x\sin(\pi y), ~~
v = e^x\cos(\pi y),~~
p = \sin(0.5\pi x)\cos(0.5\pi y).
\end{equation}
\end{itemize}
For all the problems mentioned above, the  Dirichlet boundary conditions and the source terms are determined accordingly.

We compare the performance of the TransNet and GTransNet methods. In particular, GTransNets with two and three hidden layers (i.e., $L = 2$ and $L = 3$) are both tested to investigate the effect of network depth. The numbers of neurons in the second and third hidden layers of the GTransNets are kept identical if $L=3$, and the number of neurons in the first hidden layer of the GTransNets is fixed at $N_1 = 800$ for the 2D Poisson problem (S1) and the 2D steady-state Navier-Stokes problem (S3), and $N_1 = 3000$ for the 3D Poisson problem (S2).
The numbers of interior and boundary collocation points for training, $(K_{\mathrm{int}}, K_{\mathrm{bdy}})$, are set to $(900, 200)$ for (S1) and (S3), and $(2512, 532)$ for (S2). The shape parameter $\gamma$ is set to be $1$ for (S2) and (S3), and $2$ for (S1). Note that (S3) is a nonlinear problem, so the Picard iteration method is further employed to solve the resulting nonlinear least-squares system, where the iterative scheme is obtained by linearizing the nonlinear convection term as follows:
\begin{equation}
\begin{cases}
    -\nu\Delta \mathbf{u}^{(k+1)} + \nabla p^{(k+1)} + (\mathbf{u}^{(k)}\cdot\nabla)\mathbf{u}^{(k+1)} = \mathbf{f}, & \; (x,y)\in \Omega_1, \\
    \nabla \cdot \mathbf{u}^{(k+1)} = 0, & \; (x,y) \in \Omega_1,
\end{cases}
\end{equation}
for $k\ge 1$ with the initial value $\mathbf{u}^{(0)} = (1,1)$, and the iteration terminates if $\|\mathbf{u}^{(k+1)}-\mathbf{u}^{(k)}\|_{\infty}$ is less than $10^{-12}$.

\begin{figure}[htbp]
    \centering
    \includegraphics[width=0.65\linewidth]{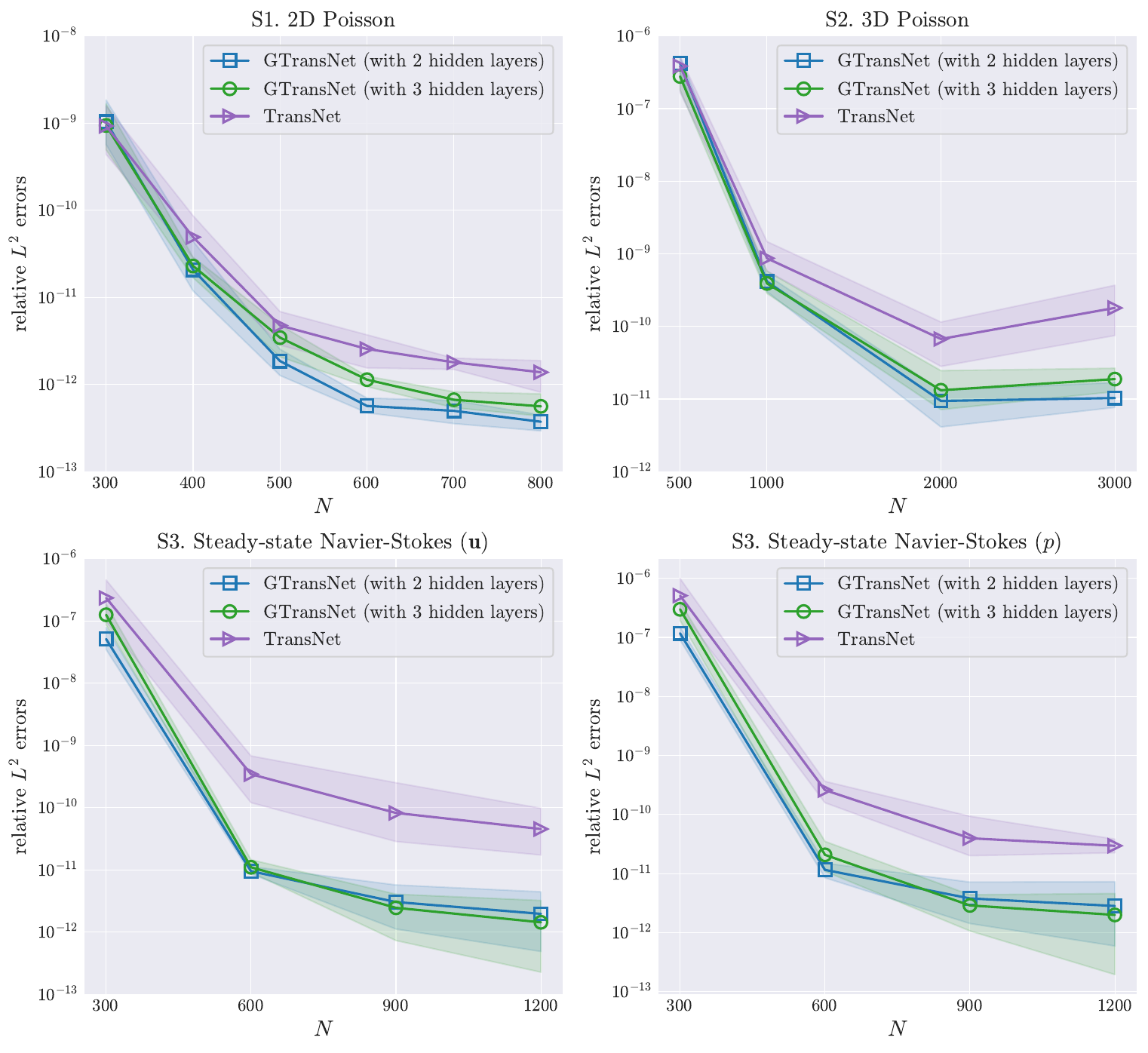}
     \vspace{-0.2cm}
    \caption{Plots of relative \( L^2 \) errors of numerical solutions produced by the TransNet method and the GTransNet method with two and three hidden layers in Section \ref{sexp},  where $N$ denotes the number of last hidden-layer neurons. Top-left: (S1); Top-right: (S2); Bottom row: (S3).}\vspace{-0.3cm}
    \label{fig:smooth_error}
\end{figure}

\begin{figure}
    \centering
    \includegraphics[width=0.9\linewidth]{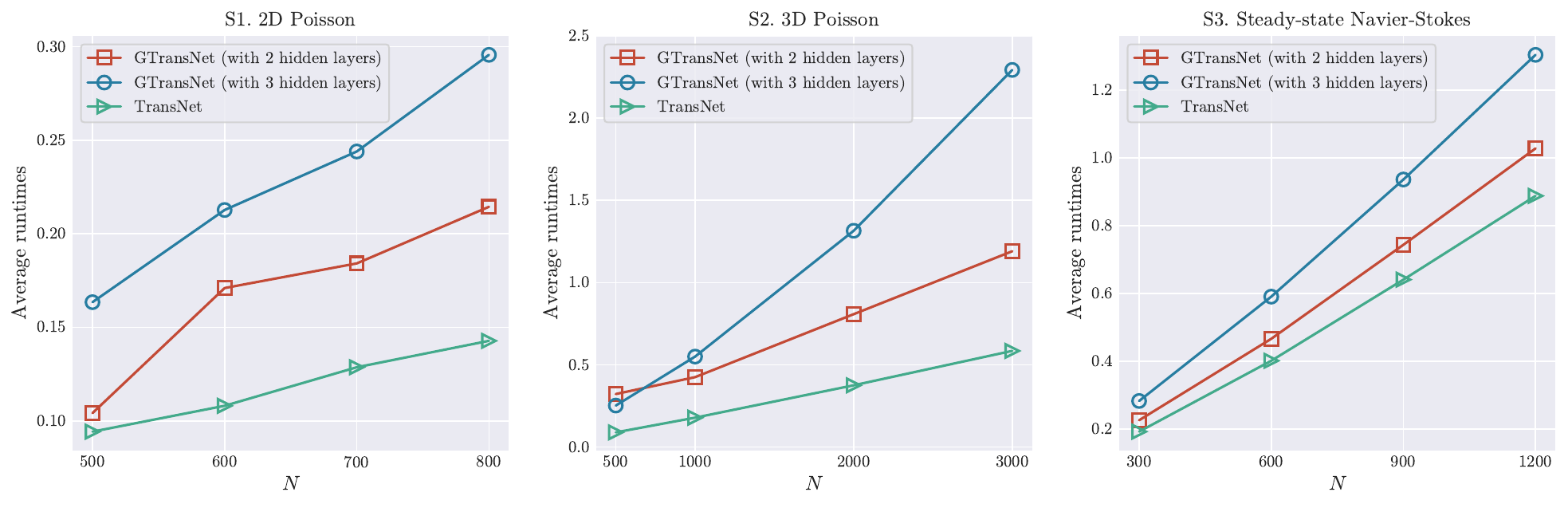}
     \vspace{-0.2cm}
    \caption{Average running times (in seconds) of the TransNet method and the GTransNet method with two and three hidden layers in Section \ref{sexp}, where $N$ denotes the number of neurons in the last hidden layer. From left to right: (S1), (S2) and (S3).}\vspace{-0.3cm}
    \label{fig:runtime}
\end{figure}

In Figure~\ref{fig:smooth_error}, we plot the relative $L^2$ errors of all tested TransNets and GTransNets, highlighting the effect of the number of neurons $N$ in the last hidden layer. The corresponding running times of these tests are reported in Figure~\ref{fig:runtime}. The following notable trends can be observed from these results:
(i) All models achieve high accuracy across all three test problems, confirming the reliability of the underlying network architectures.
(ii) The running time increases almost linearly with respect to the value of $N$ for both TransNet and GTransNet across all three problems.
(iii) For any fixed $N$, the errors of GTransNet are consistently and significantly smaller than those of TransNet. Conversely, to reach a comparable error level, GTransNet requires far fewer neurons in the last hidden layer, resulting in a much smaller least-squares system.
(iv) Although the resulting least-squares systems are of the same size, GTransNet incurs higher running times than TransNet, since the addition of extra hidden layers increases the computational cost of assembling the least-squares system and solving it.
(v) Adding more hidden layers in GTransNet, such as increasing from $L = 2$ to $L = 3$, only slightly improves solution accuracy but inevitably increases computational time.
Hence, the GTransNet with two hidden layers is used in all subsequent experiments due to its favorable balance between accuracy and efficiency, and is hereafter referred to simply as GTransNet.

\subsection{Examples with highly oscillatory solutions}
We now demonstrate the performance of the GTransNet in solving PDEs with highly oscillatory solutions, including the Poisson equation, the Helmholtz equation, the multiscale elliptic problem, and the steady-state Allen–Cahn equation. All aforementioned 2D and 3D computational domains except $\Omega_2$ are involved, with the numbers of interior and boundary collocation points for training, $(K_{\mathrm{int}}, K_{\mathrm{bdy}})$, being $(10000, 600)$, $(9256, 500)$, $(10752, 500)$, $(10800, 580)$, $(12712, 2500)$ and $(16000,4800)$ for $\Omega_1$ and $\Omega_3$ to $\Omega_7$, respectively.
Unless otherwise specified, the number of neurons in the first hidden layer of all tested GTransNets is fixed at $N_1 = 6000$ for the 2D problems and $N_1 = 12000$ for the 3D problems in order to capture the intrinsic features.

\subsubsection{The Poisson equation}
Let us first investigate the effectiveness of the GTransNet method  in solving the Poisson equation 
for high-frequency solutions. We consider the following three representative cases:

\begin{itemize}
\item[] \textbf{Case 1.} The 2D Kite-shaped domain $\Omega_3$ with the exact solution
\begin{equation}
u(x,y) = \sin(2\pi x) \sin(30\pi y).
\end{equation}
\item[] \textbf{Case 2.} The 2D L-shaped domain $\Omega_5$ with the exact solution 
\begin{equation}
u(x,y) = \cos(100 x) \cos(100 y).
\end{equation}
\item[] \textbf{Case 3.} The 3D ball domain $\Omega_6$ and the exact solution
\begin{equation}
u(x,y,z) = \sin(2x) + \cos(10 y) + \sin(36 z).
\end{equation}
\end{itemize}
The source term s and the Dirichlet boundary condition are again determined accordingly for each case.

\begin{figure}[ht!]
    \centering
    \includegraphics[width=0.9\linewidth]{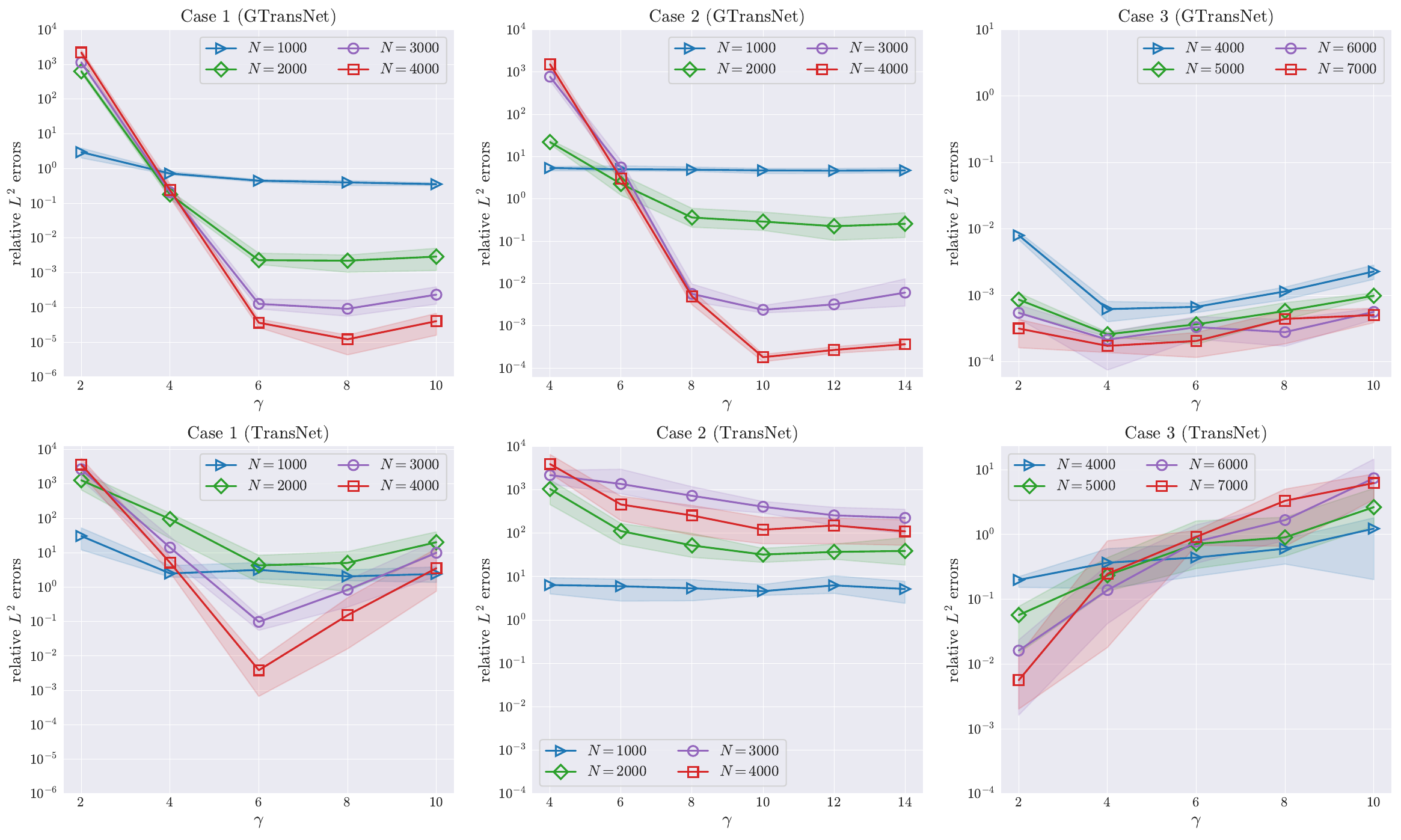}
     \vspace{-0.2cm}
    \caption{Plots of the relative $L^2$ errors of numerical solutions produced by the GTransNet method
    ({top row}) and the TransNet method ({bottom row}) with different values of the last hidden-layer neurons $N$ and the shape parameter $\gamma$ for the three cases of the Poisson equation 
    in Section 4.2.1. From left to right: Cases 1 (2D), Case 2 (2D), and Case 3 (3D).
    }\vspace{-0.3cm}
    \label{fig:Poisson_convergence}
\end{figure}

\begin{figure}[ht!]
    \centering
    \includegraphics[width=0.9\linewidth]{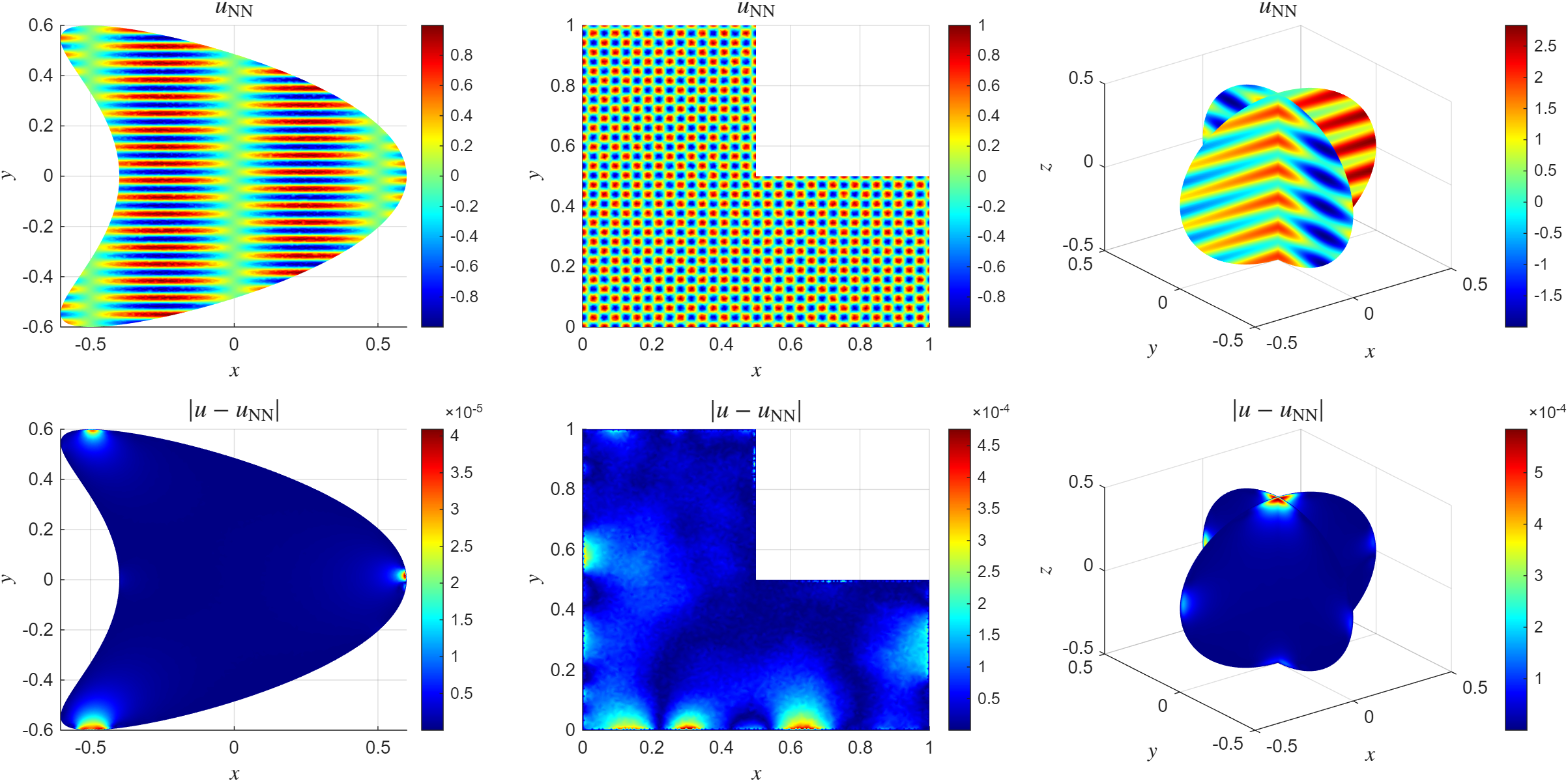}
     \vspace{-0.1cm}
    \caption{Numerical solutions ({top row}) and the corresponding absolute point-wise errors ({bottom row}) produced by the GTransNet method for the three cases of the Poisson equation 
    in Section 4.2.1. Left column: Case 1 (2D), $N = 4000, \gamma = 8$; Middle column: Case 2 (2D), $N = 4000, \gamma = 10$; Right column: Case 3 (3D), $N = 7000, \gamma = 4$, at the two cross sections $x = 0$ and $y = 0$.
    }\vspace{-0.3cm}
    \label{fig:Poisson_equation}
\end{figure}
 
Figure \ref{fig:Poisson_convergence} illustrates the impact of the shape parameter $\gamma$ on the relative $L^2$ error of GTransNet and TransNet in solving the Poisson equation, with different numbers of neurons $N$ in the last hidden layer. The results indicate that:
(i) When $\gamma$ is small, both methods exhibit low accuracy across all three test cases, as the relatively smooth neurons are unable to effectively capture high-frequency features.
(ii) As $\gamma$ increases, GTransNet significantly outperforms TransNet, achieving better accuracy by several orders of magnitude. In Case 1, TransNet attains a relative $L^2$ error of approximately $\mathcal{O}(10^{-3})$, whereas in Case 2 and Case 3, it almost fails completely.
(iii) For larger values of $\gamma$, increasing the number of neurons in the last hidden layer of GTransNet further reduces the error.
(iv) For all values of $N$, the error may eventually reach a ``saturation'' level as $\gamma$ increases. This phenomenon can be attributed to two factors: when $N$ is small, the model’s expressive capacity is limited, leading to rapid saturation; when $N$ is large, the model has sufficient expressive capacity, and saturation occurs later, possibly due to the fixed number of collocation points used here.

Figure \ref{fig:Poisson_equation} presents several numerical results of GTransNet, including the predicted solutions and corresponding absolute pointwise errors. For Case 1, Case 2, and Case 3, the shape parameters $\gamma$ are set to $8$, $10$, and $4$, respectively, and the numbers of neurons in the last hidden layer $N$ are $4000$, $4000$, and $7000$, respectively. We observe that GTransNet achieves excellent numerical accuracy in high-frequency scenarios, with the absolute pointwise errors consistently maintained at low levels, approximately $\mathcal{O}(10^{-5})$ to $\mathcal{O}(10^{-4})$ and $\mathcal{O}(10^{-4})$, respectively.

\subsubsection{The Helmholtz equation}
We next consider the Helmholtz equation in the 2D flower-shape domain $\Omega_4$:
\begin{equation}\label{eq:Helmholtz}
        -\Delta u - k^2 u = 0, \quad (x, y) \in \Omega_4, 
\end{equation}
where the wave number is defined by
$
\textstyle k = \frac{2\pi \nu}{c},
$
with wave speed $c = 340\,\mathrm{m/s}$ and frequency $\nu$ taking three different values, 4000 Hz, 6000 Hz, and 8000 Hz, respectively. The corresponding exact solution is given by
\begin{equation}
    u(x, y) = \textstyle\sin\left(\frac{k}{\sqrt{2}}x\right)\sin\left(\frac{k}{\sqrt{2}}y\right),
\end{equation}
which is highly oscillatory and the Dirichlet boundary condition is imposed.

\begin{figure}[ht!]
    \centering
    \includegraphics[width=0.9\linewidth]{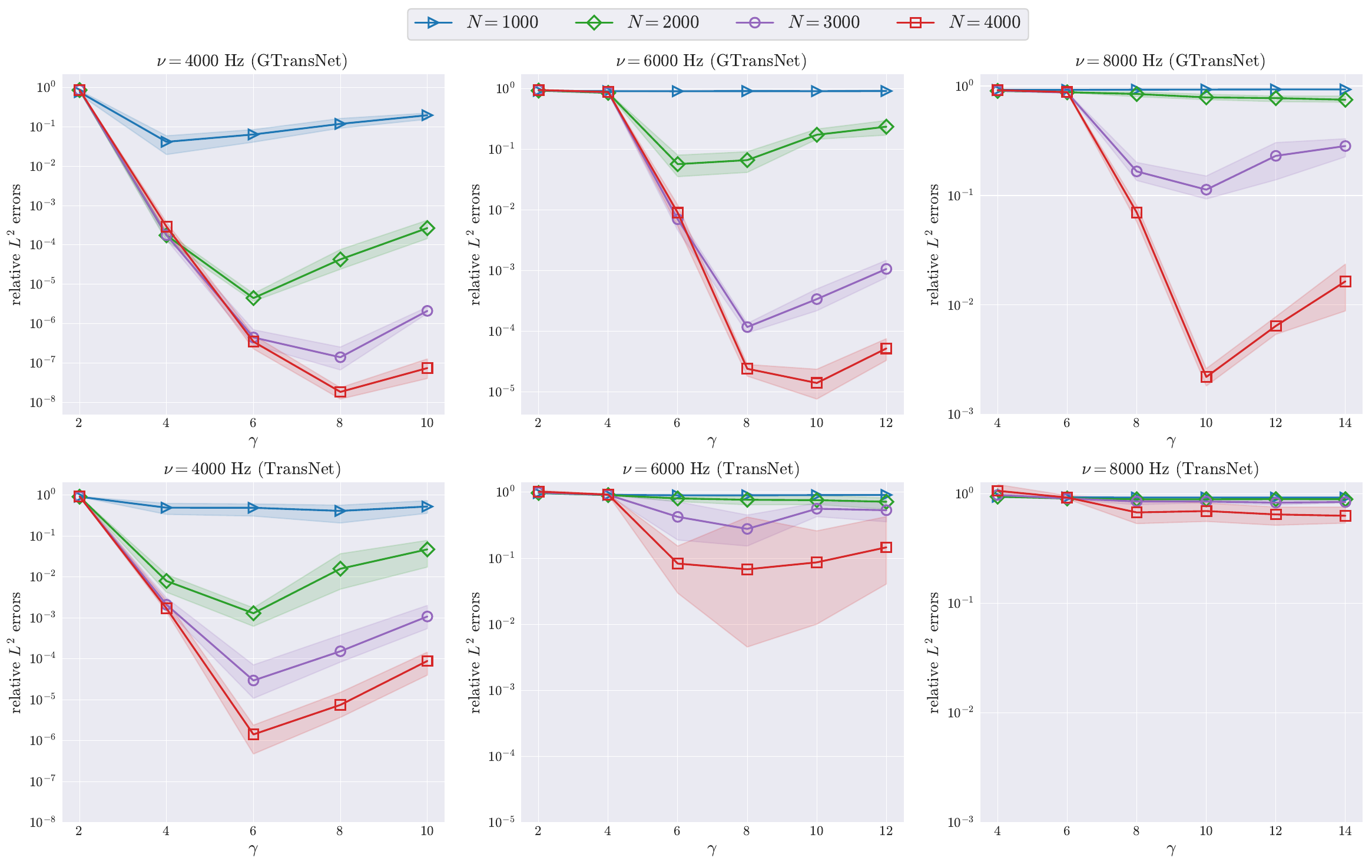}
     \vspace{-0.2cm}
    \caption{Plots of the relative $L^2$ errors of numerical solutions produced by the GTransNet method ({top row}) and the TransNet method ({bottom row}) with different values of the last hidden-layer neurons $N$ and the shape parameter $\gamma$ for the 2D Helmholtz equation in  Section 4.2.2. From left to right: $\nu = 4000$, 6000, and 8000 Hz.}\vspace{-0.3cm}
    \label{fig:Helmholtz_convergence}
\end{figure}

\begin{figure}[ht!]
    \centering
    \includegraphics[width=0.9\linewidth]{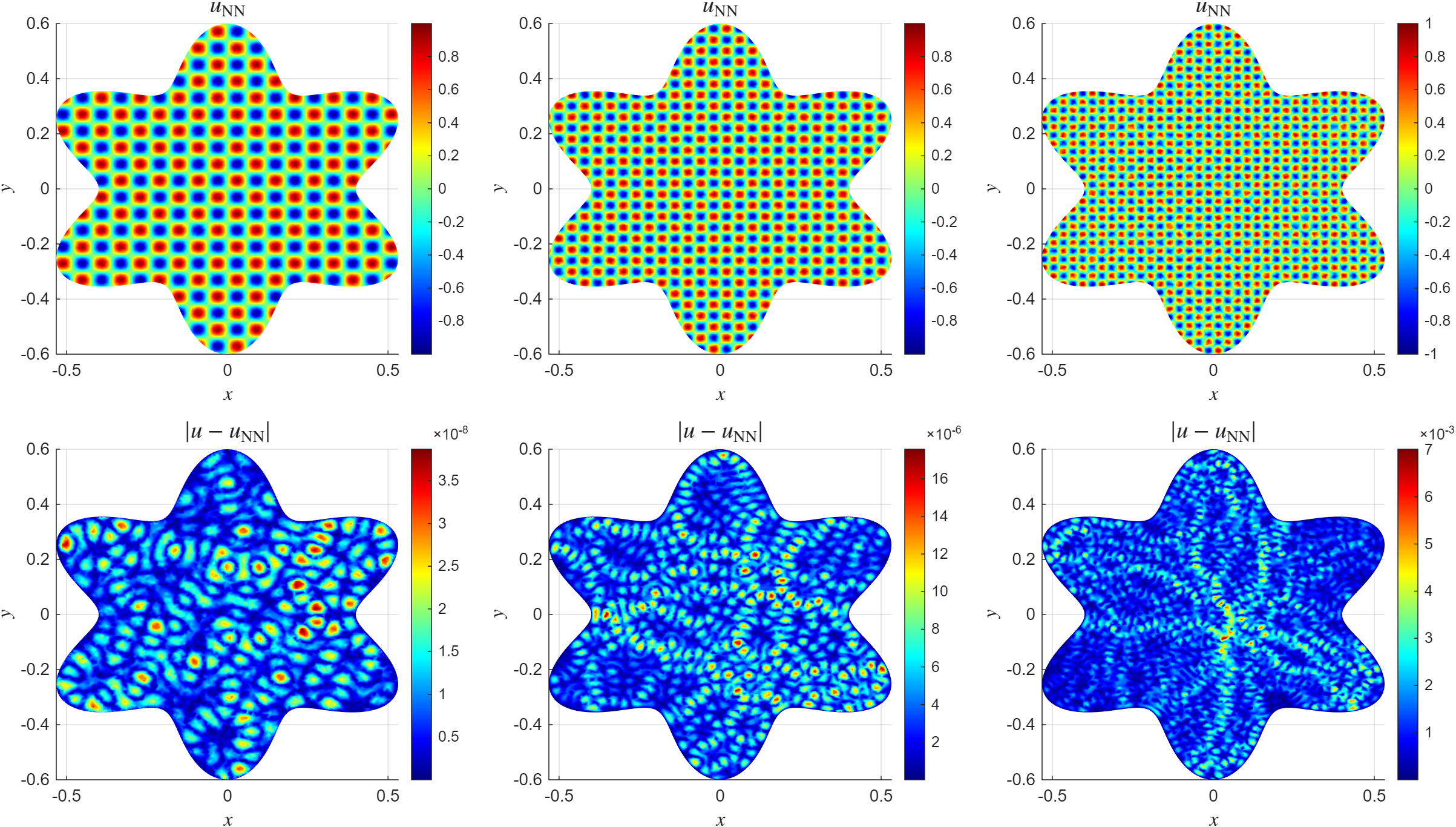}
     \vspace{-0.1cm}
    \caption{Numerical solutions ({top row}) and the corresponding absolute point-wise errors ({bottom row}) produced by the GTransNet method for the 2D Helmholtz equation  in Section 4.2.2. Left column: $\nu = 4000$ Hz, $N=4000, \gamma = 6$; middle column: $\nu = 6000$ Hz, $N=4000, \gamma = 10$; right column: $\nu = 8000$ Hz, $N=4000,\gamma = 10$.}\vspace{-0.3cm}
    \label{fig:Helmholtz_equation}
\end{figure}

Figure \ref{fig:Helmholtz_convergence} illustrates the effect of the shape parameter $\gamma$ on the relative $L^2$ error of GTransNet and TransNet for solving the Helmholtz equation \eqref{eq:Helmholtz}, while also comparing performance under varying numbers of neurons $N$ in the last hidden layer. The results exhibit behaviors similar to those observed for the Poisson equation in Section 4.2.1. TransNet performs reasonably well at the low frequency ($\nu = 4000$ Hz), but its accuracy deteriorates markedly at higher frequencies ($\nu = 6000$ Hz and $8000$ Hz). In contrast, GTransNet maintains much lower errors and is less sensitive to sampling results of the hidden layer neural parameters across all three frequencies, demonstrating its robustness and superior capability in capturing high-frequency features.

Figure \ref{fig:Helmholtz_equation} presents several numerical results of GTransNet, including the numerical solutions and the corresponding absolute pointwise errors. For the frequencies $\nu = 4000$ Hz, $6000$ Hz, and $8000$ Hz, the shape parameter $\gamma$ is set to $6$, $10$, and $10$, respectively, while the number of neurons in the last hidden layer $N$ is fixed at $4000$. It is observed that GTransNet achieves excellent numerical accuracy under all three frequency conditions, with absolute pointwise errors of approximately $\mathcal{O}(10^{-8})$, $\mathcal{O}(10^{-5})$, and $\mathcal{O}(10^{-3})$, respectively. Although the error naturally increases with frequency, these results demonstrate that GTransNet remains both stable and accurate even in high-frequency regimes.

\subsubsection{The multiscale elliptic problem}
We consider a  multiscale elliptic PDE problem defined on a 2D square domain $\Omega_1$:
\begin{equation}\label{eq:multiscale}
    -\nabla \cdot \left( A^{\varepsilon}(x,y) \nabla u \right) = f, \quad (x,y) \in \Omega_1, 
\end{equation}
where the diffusion coefficient exhibits rapid oscillations and is defined as:
\begin{equation}
A^{\varepsilon}(x,y) = \frac{1}{4 + \cos\left(\frac{2\pi (x^2 + y^2)}{\varepsilon}\right)},
\end{equation}
with \(\varepsilon\) representing the scale separation parameter. The source term is given by
$f(x,y) = - (x^2 + y^2)$.
The corresponding analytical solution reads
\begin{equation}
\textstyle
u(x,y) = u_p(x,y) + \frac{\varepsilon}{16 \pi}(x^2 + y^2) \sin\left(\frac{2\pi (x^2 + y^2)}{\varepsilon}\right) + \frac{\varepsilon^2}{32 \pi^2} \cos\left(\frac{2\pi (x^2 + y^2)}{\varepsilon}\right),
\end{equation}
where $u_p(x,y) = \frac14(x^2+y^2)^2$. The Dirichlet boundary condition is again  imposed.

Such multiscale solution problems are common in engineering applications, including materials science (e.g., microscale diffusion in composite materials), groundwater flow simulation, and electromagnetic wave propagation. As $\varepsilon$ decreases, the diffusion coefficient exhibits high-frequency oscillations, resulting in solutions with complex high-frequency and multiscale structures. Traditional numerical methods, such as finite element or finite difference methods, usually require extremely fine meshes to resolve these rapid variations, leading to high computational costs and reduced efficiency. This example serves to assess the stability and accuracy of GTransNet in the presence of highly oscillatory diffusion coefficients.
We compare the performance of GTransNet with TransNet under three different values of the scale separation parameter $\varepsilon$, namely $0.5$, $0.2$, and $0.1$, respectively, highlighting GTransNet’s effectiveness in capturing multiscale features.

\begin{figure}[ht!]
    \centering
    \includegraphics[width=0.9\linewidth]{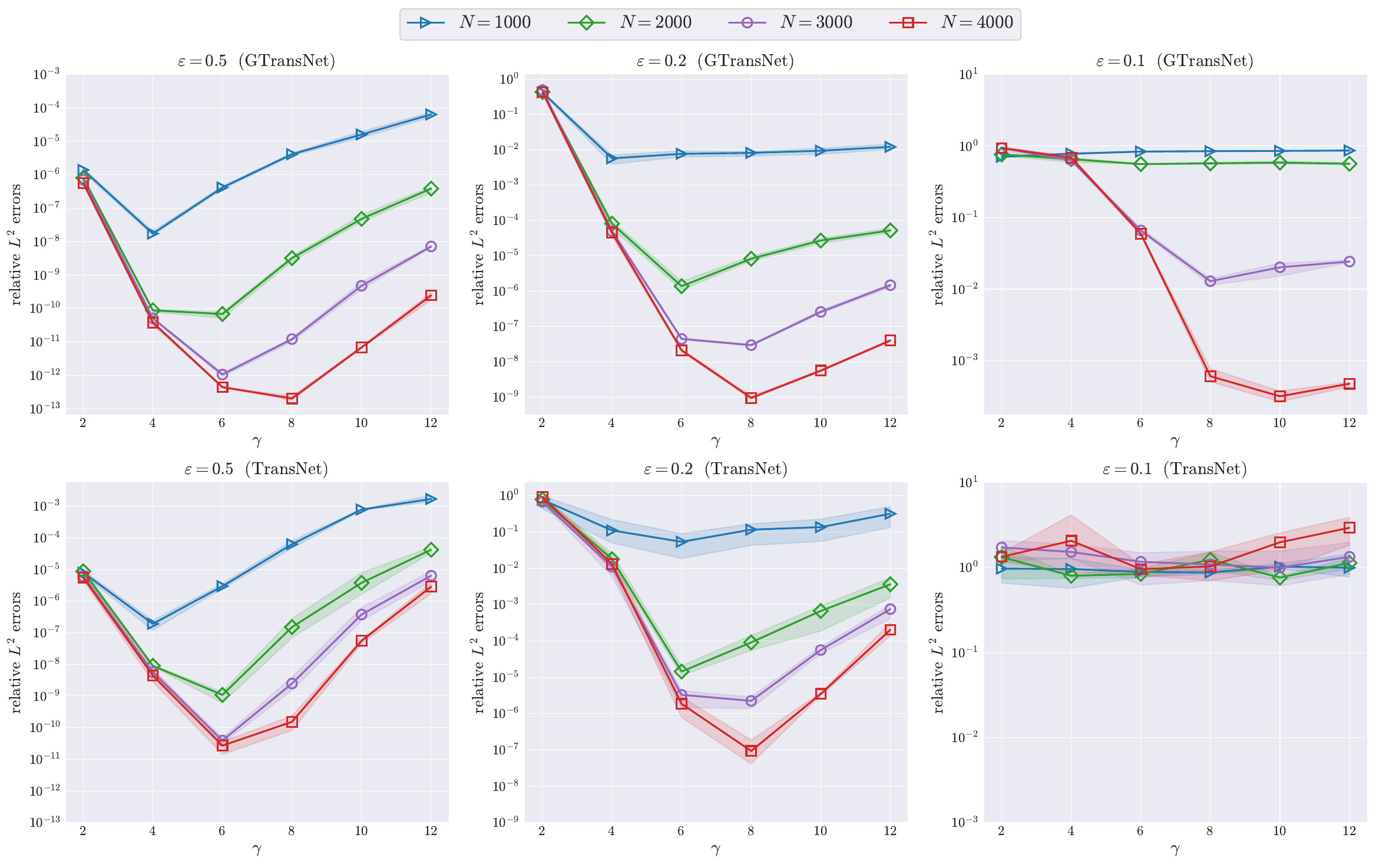}
     \vspace{-0.2cm}
    \caption{Plots of the relative $L^2$ errors of numerical solutions produced by the GTransNet method ({top row}) and the TransNet method ({bottom row}) with different values  of the last hidden-layer neurons $N$ and the shape parameter $\gamma$ for the 2D multiscale elliptic problem in Section 4.2.3. From left to right: $\varepsilon = 0.5$, 0.2, and 0.1.}\vspace{-0.3cm}
    \label{fig:multiscale_convergence}
\end{figure}

\begin{figure}[ht!]
    \centering
    \includegraphics[width=0.9\linewidth]{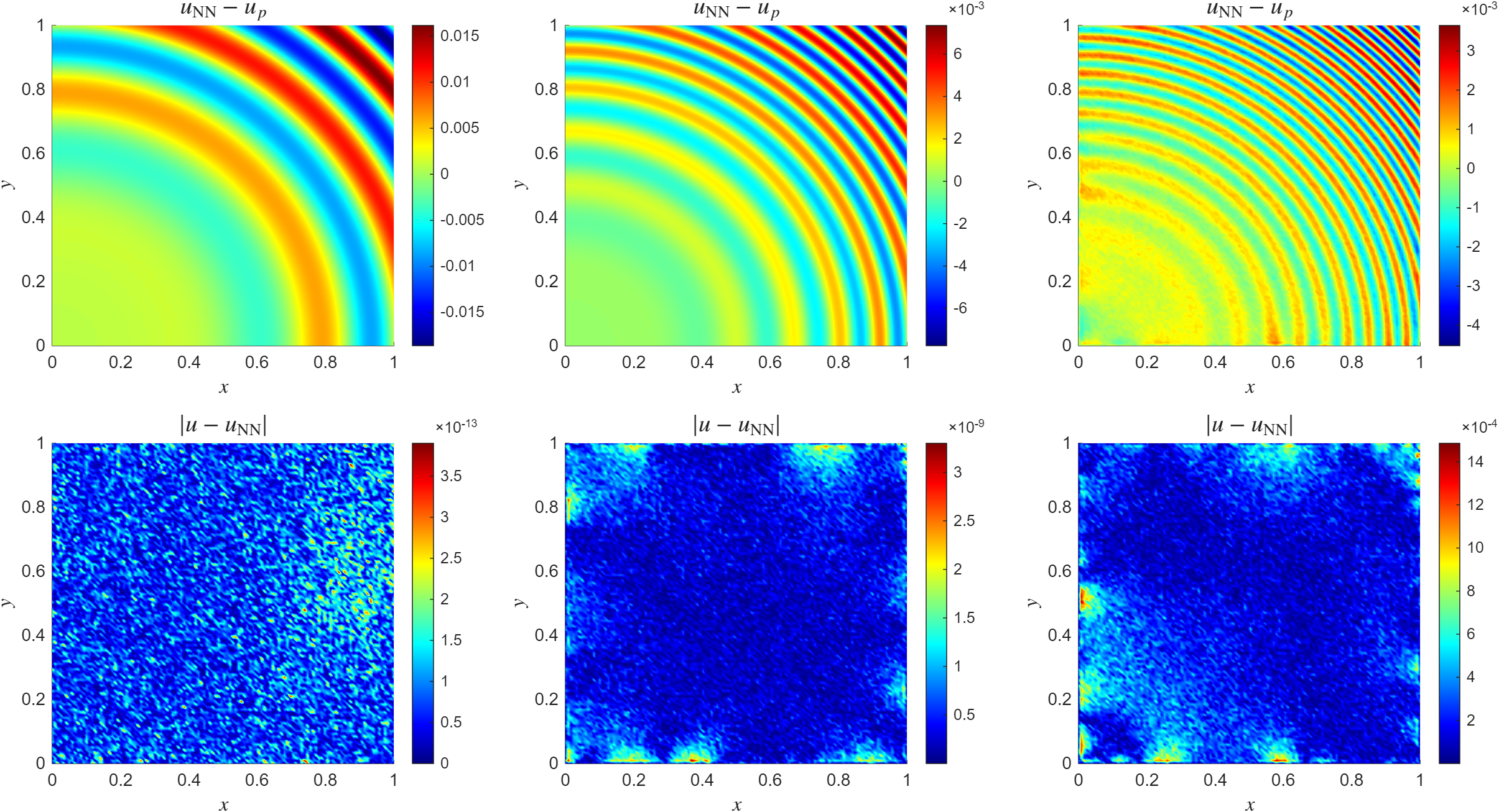}
    \vspace{-0.2cm}
    \caption{Numerical solutions $u_{\mathrm{NN}} - u_p$ ({top row}) and the corresponding absolute point-wise errors of numerical solutions ({bottom row}) produced by the GTransNet method for the 2D multiscale elliptic problem \eqref{eq:multiscale}  in Section 4.2.3. Left column: $\varepsilon = 0.5, N = 4000, \gamma = 6$; Middle column: $\varepsilon = 0.2, N = 4000, \gamma = 8$; Right column: $\varepsilon = 0.1, N = 4000, \gamma = 8$.}\vspace{-0.3cm}
    \label{fig:multiscale}
\end{figure}

Figure \ref{fig:multiscale_convergence} illustrates the effect of the shape parameter $\gamma$ on the relative $L^2$ error of GTransNet and TransNet for solving the multiscale elliptic equation \eqref{eq:multiscale}, while also exploring the effect of the number of neurons $N$ in the last hidden layer. The results exhibit behaviors similar to those observed for the Poisson and Helmholtz equations in the previous subsections. As $\varepsilon$ decreases, the solution accuracy of TransNet deteriorates significantly across all configurations. In contrast, GTransNet consistently performs much better than TransNet and maintains much lower errors for all three scale separation parameters, clearly demonstrating its robustness and strong capability in handling multiscale elliptic problems with highly oscillatory coefficients and solutions.

Figure \ref{fig:multiscale} presents several numerical results of GTransNet, including $u_{\mathrm{NN}} - u_p$ and the absolute pointwise errors of the numerical solutions. The last hidden layer contains $N = 4000$ neurons, with $\gamma$ set to $6$, $8$, and $8$ for $\varepsilon = 0.5$, $0.2$, and $0.1$, respectively. The results show that GTransNet performs excellently under all three parameter settings, with the absolute pointwise errors being approximately $\mathcal{O}(10^{-13})$, $\mathcal{O}(10^{-9})$, and $\mathcal{O}(10^{-4})$, respectively.

\subsubsection{The steady-state Allen-Cahn equation}
In this example, we consider the steady-state Allen-Cahn equation on the  3D box domain $\Omega_7$:
\begin{equation}\label{eq:allen_cahn}
    -\Delta u + \textstyle\frac{1}{0.05^2}(u^3 - u) = f, \quad (x,y,z) \in \Omega_7
\end{equation}
with the periodic boundary condition. The exact solution is given by
\begin{equation}
u(x,y,z) = \cos(36x(1-x))\cos(36y(1-y))\cos(36z(1-z))
\end{equation}
and then the source term $f$ is determined accordingly.
Note that this equation is nonlinear thus the Picard iteration method is again used to solve the resulting nonlinear least-squares system.
The iterative scheme is  obtained by treating the nonlinear reaction term explicitly with stabilization as follows:
\begin{equation}
\textstyle -\Delta u^{(k+1)} + \frac{S}{0.05^2} u^{(k+1)} =
f - \frac{1}{0.05^2}((u^{(k)})^3-(1+S)u^{(k)})
\end{equation}
for $k\ge 1$, with the stabilizing parameter $S=2$  and the initial value $u^{(0)} = 1$.

\begin{figure}[ht!]
    \centering
    \includegraphics[width=0.65\linewidth]{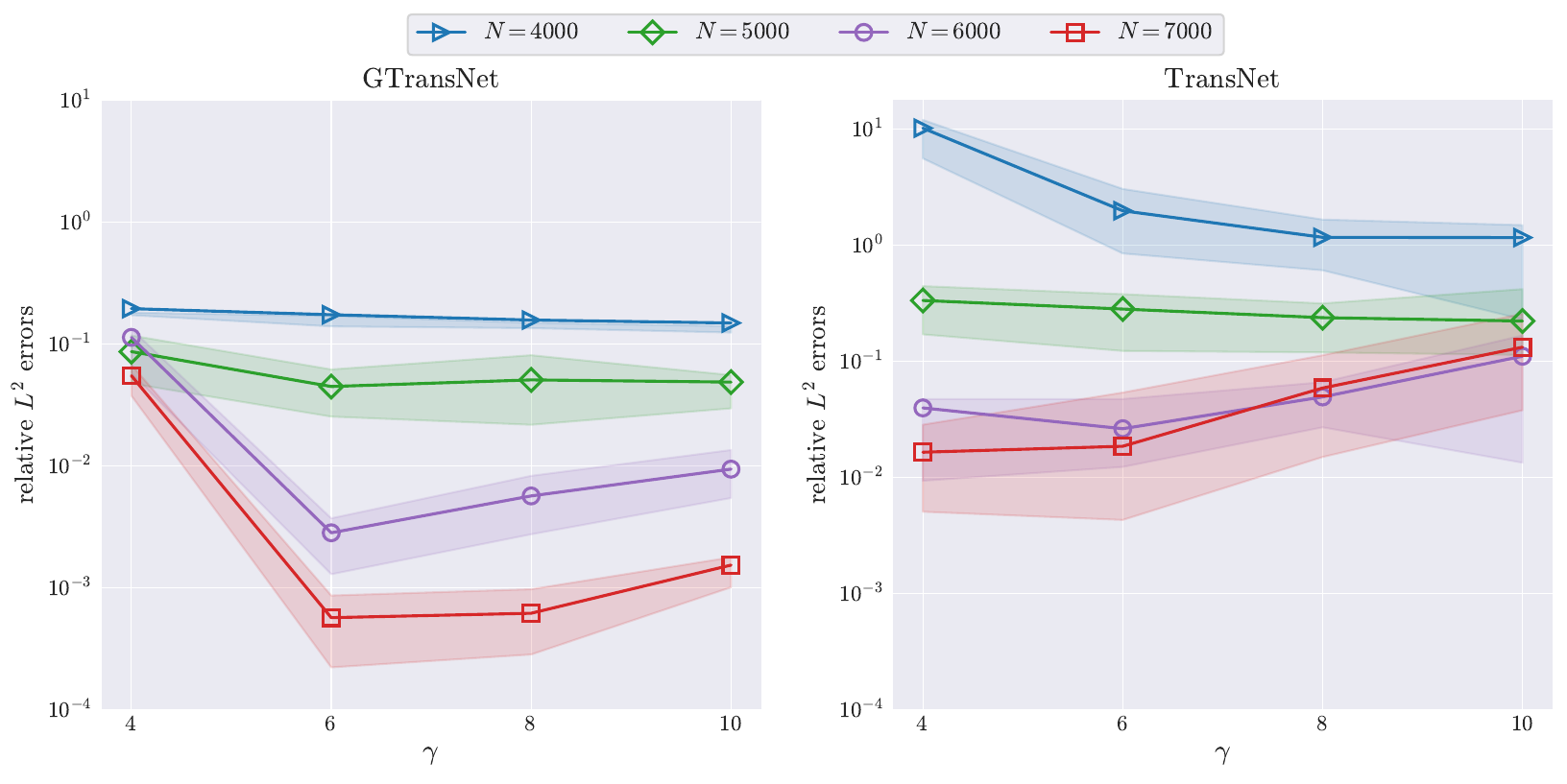}
    \vspace{-0.2cm}
    \caption{Plots of the relative $L^2$ errors of numerical solutions produced by the GTransNet method
    ({left}) and the TransNet method ({right}) method with different  values of the last hidden-layer neurons $N$ and the shape parameter $\gamma$ for the 3D steady-state Allen-Cahn equation  in Section 4.2.4.}\vspace{-0.3cm}
    \label{fig:Allen_convergence}
\end{figure}

\begin{figure}[ht!]
    \centering
    \includegraphics[width=0.72\linewidth]{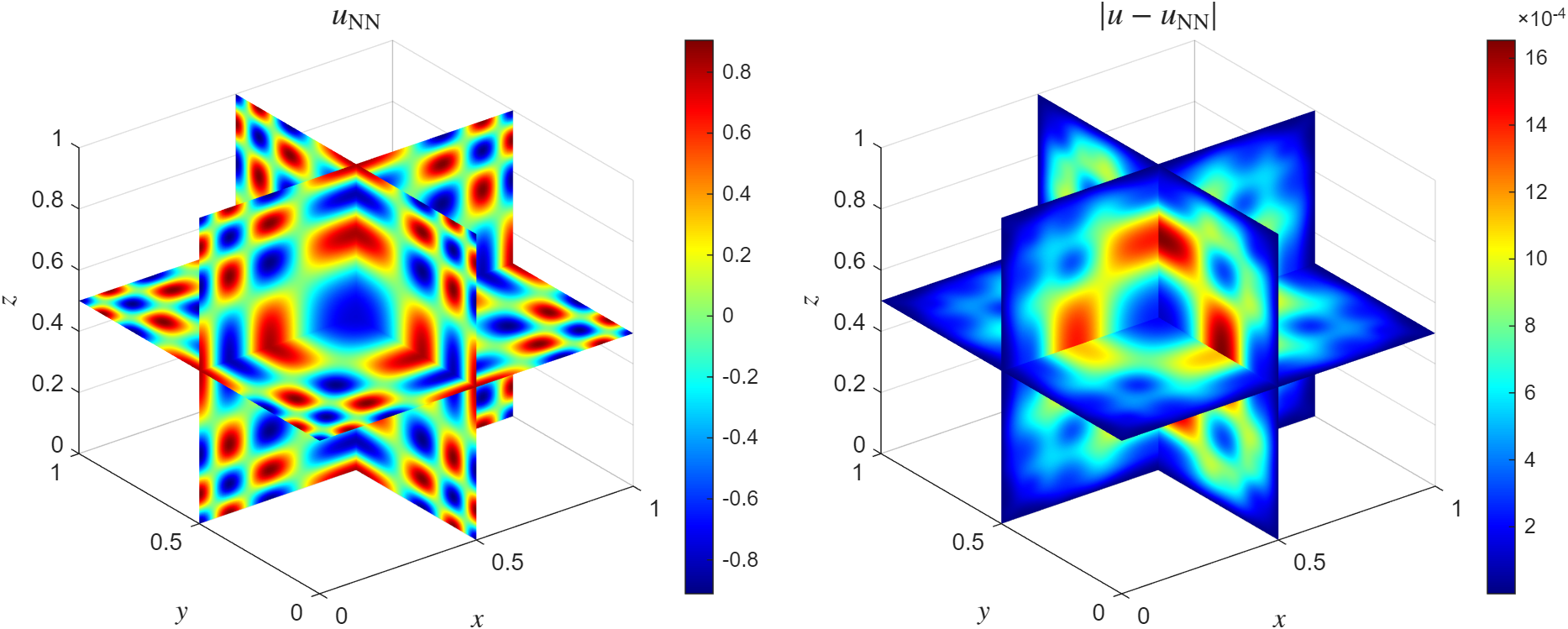}
    \vspace{-0.1cm}
    \caption{Numerical solution (left) and the corresponding absolute point-wise error (right) at the three cross sections $x=0.5$, $y=0.5$ and $z=0.5$  produced by the GTransNet method with $N=7000$ and $\gamma=6$ for the 3D steady-state Allen-Cahn equation in Section 4.2.4.}\vspace{-0.3cm}
    \label{fig:allen_cahn}
\end{figure}

In this example, GTransNet again produces much better numerical solutions than TransNet. Figure~\ref{fig:Allen_convergence} presents the relative $L^2$ errors of GTransNet and TransNet for the steady-state Allen–Cahn equation \eqref{eq:allen_cahn}, illustrating the effect of the shape parameter $\gamma$ and the performance under different numbers of neurons in the last hidden layer. We observe that a larger $N$ (e.g., $7000$) combined with a moderate $\gamma$ (e.g., $6$) yields the best accuracy for this problem among all tested settings of GTransNet.
Figure~\ref{fig:allen_cahn} further presents the predicted solution and the corresponding absolute pointwise error at the three cross sections $x = 0.5$, $y = 0.5$, and $z = 0.5$ produced by GTransNet with this setting, showing that the pointwise errors are approximately $\mathcal{O}(10^{-4})$ and remain low across the entire computational domain.

\section{Conclusion}\label{sec:5}
In this paper, we have proposed the GTransNet method for solving steady-state PDEs, which inherits the efficiency advantages of current popular shallow neural network–based PDE solvers by avoiding complex mesh generation in classical numerical schemes and the slow SGD training process in neural network methods. Our GTransNet can make use of fast least-squares solution techniques while overcoming the expressive power bottleneck of the TransNet method. Its ability to generate uniformly distributed high-frequency and multiscale features without excessive neuron expansion makes it an accurate, efficient and scalable computational framework for solving steady-state PDEs with  highly oscillatory solutions. Some future research directions can be explored along two main aspects. One is to investigate the optimization of the shape parameter $\gamma$ or $\{\gamma_m\}_{m=1}^{N_1}$ of the first hidden-layer neurons through adaptive algorithms, in order to further enhance the model’s performance across diverse problems. The other is to explore the combination of GTransNet with domain decomposition techniques to further improve computational efficiency.

\bibliographystyle{plain}
\bibliography{ref}
\end{document}